\documentclass[11pt,a4paper]{amsart}
\usepackage[T1]{fontenc}
\usepackage[utf8]{inputenc}
\usepackage{lmodern,amsmath,amssymb,amsthm,mathtools,microtype}
\usepackage[margin=25mm]{geometry}
\usepackage{booktabs,longtable,array,tabularx,enumitem,xurl,needspace}
\usepackage[hidelinks]{hyperref}
\hypersetup{pdftitle={A certified classical-strip comparison for the de Bruijn-Newman constant},
 pdfauthor={Michel Planat},
 pdfsubject={Classical-strip comparison below 0.148612 under explicit analytic premises}}
\newtheorem{theorem}{Theorem}[section]
\newtheorem{proposition}[theorem]{Proposition}
\newtheorem{lemma}[theorem]{Lemma}

\theoremstyle{definition}
\theoremstyle{remark}\newtheorem{remark}[theorem]{Remark}
\newcommand{\Rea}{\operatorname{Re}}
\newcommand{\Ima}{\operatorname{Im}}
\newcommand{\dd}{\,\mathrm d}
\newcommand{\eps}{\varepsilon}
\newcommand{\HH}{\mathsf H}
\newcommand{\ceil}[1]{\left\lceil#1\right\rceil}
\newcommand{\floor}[1]{\left\lfloor#1\right\rfloor}
\newcommand{\e}{\mathrm e}
\newcommand{\qstart}{q_{\mathrm{in}}}
\newcommand{\qend}{q_{\mathrm e}}
\setlist{itemsep=3pt,topsep=5pt,leftmargin=1.8em}
\title[A classical-strip comparison below \(0.148612\)]{A certified classical-strip comparison\\
for the de Bruijn--Newman constant}
\author{Michel Planat}
\address{Institut FEMTO-ST, CNRS UMR 6174, Universit\'e Marie et Louis Pasteur, Besan\c{c}on, France}
\subjclass[2020]{Primary 11M26; Secondary 30D15, 65G30}
\keywords{de Bruijn--Newman constant, collective contraction, translated probes, rational certificates, logarithmic derivatives, exponent pairs}
\date{13 October 2026}
\begin{document}
\begin{abstract}
We give a directed comparison beginning with the classical zero strip
\(\lvert\Ima z\rvert\le1\) for the de Bruijn--Newman heat deformation.
Under explicitly stated complete model-to-function source, localisation
and global contact premises, translated-probe estimates imply
\[
 \Lambda\le\frac{148611789321711}{10^{15}}
       =0.148611789321711<0.148612.
\]
The construction uses Raymond's finite source data and 57 fixed
translated kernels, independently checked by rational Bernstein
certificates. An explicit finite Weyl estimate improves the positive
first-head tail and its centred logarithmic moment. A centred cone
transfer retains the original physical error allowances, exceptional
debit and additive cone reserve, and charges the change in existing
phase-surplus credits. Directed tail enclosures are regenerated at
192 and 256 bits; a separate exact-rational auditor accepts all
19,023 comparison steps. The controlled decrease from the complete
two-source comparison is \(0.000021760452379\).
We also prove a compatible matched logarithmic tail transfer and
identify the finite requirements for stronger exponent pairs.
A supplementary section records Raymond's ceiling below \(0.116398\)
under the distinct \(7/8\) zero-free hypothesis.
\end{abstract}
\maketitle

\section{Introduction and the main result}

Use the Polymath normalisation
\begin{equation}\label{eq:heat}
 H_t(z)=\int_0^\infty \e^{tu^2}\Phi(u)\cos(zu)\dd u,\qquad
 \Phi(u)=\sum_{n\ge1}(2\pi^2n^4\e^{9u}-3\pi n^2\e^{5u})
                    \e^{-\pi n^2\e^{4u}}.
\end{equation}
Then \(H_t\) is real and even, \(H_0(z)=\xi((1+iz)/2)/8\), and
\(\partial_tH_t=-\partial_z^2H_t\).
The de Bruijn--Newman constant is characterised by
\[
 H_t\text{ has only real zeros}\quad\Longleftrightarrow\quad t\ge\Lambda.
\]
The underlying existence and strip-contraction results are due to
de Bruijn and Newman \cite{debruijn,newman}.
Rodgers and Tao proved \(\Lambda\ge0\) \cite{rodgerstao}.
Effective approximation and numerical barrier arguments were developed
by Polymath \cite{polymath}; Platt and Trudgian's finite-RH verification
also gives the published consequence \(\Lambda\le0.2\) \cite{pt}.

The critical strip alone gives \(\lvert\Ima z\rvert\le1\) for the zeros
of \(H_0\). The collective interaction of a highest nonreal zero with
the remaining zeros strengthens ordinary strip contraction. An off-zero
logarithmic derivative converts this interaction into a quantity
accessible to a finite approximation; this viewpoint was developed in
\cite{planat}. Raymond's translated-probe construction
\cite{raymondclassical,raymondfoundations,raymondretuned}
retains the common phases of two nearby observations and supplies
complete finite source bounds at prescribed heights.
The classical-strip ceiling reported in \cite{raymondclassical} is
\(0.149395073827688<0.149396\).

The present contribution is a new comparison from the classical strip
using both complete source schedules and the enlarged fixed-kernel
catalogue in \cite{raymondretuned}. The source estimates in that release
do not require its narrower initial zero strip: that additional input
enters the scalar comparison through the initial squared height.
This separation permits a new classical-strip computation.
We further strengthen its complete source by an explicit finite
Weyl tail estimate and a centred cone transfer. The unrefined
two-provider calculation remains a controlled comparison; the
tail-based decrease is established by a complete directed replay.

\Needspace{11\baselineskip}
\begin{theorem}[Classical-strip transfer]\label{thm:main}
Assume the true-function source, finite localisation and uniform
first-contact premises \textup{(A1)--(A3)} of
Section~\ref{sec:premises}. With the two complete source providers refined in
Section~\ref{sec:exponent}, and the 57 fixed kernels specified in
Appendix~\ref{app:kernels}, the directed comparison of
Section~\ref{sec:certificate} gives
\begin{equation}\label{eq:main}
 \boxed{\Lambda\le\frac{148611789321711}{10^{15}}
              =0.148611789321711<0.148612.}
\end{equation}
No hypothesis \(\HH_\alpha\), with \(\alpha<1\), concerning a fixed
zero-free half-plane for the Riemann zeta function is used.
\end{theorem}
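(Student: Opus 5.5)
The plan is to argue by contradiction along the heat flow, in the style of de Bruijn--Newman strip contraction strengthened by the collective interaction described in the introduction. Fix \(\Lambda_\ast=148611789321711/10^{15}\) and let \(y(t)\) denote the largest imaginary part among the zeros of \(H_t\). Since \(H_0(z)=\xi((1+iz)/2)/8\) and the critical strip is classical, we have \(y(0)\le1\). Only this strip is used at time \(0\), so no hypothesis \(\HH_\alpha\) with \(\alpha<1\) enters. We then track the squared height \(q(t)=y(t)^2\), with initial value \(\qstart=1\). Because \(\partial_tH_t=-\partial_z^2H_t\), a highest nonreal zero \(z_0=x_0+iy_0\) moves with velocity given by a sum over the other zeros. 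Pairing each zero with its conjugate shows that \(\tfrac{d}{dt}y^2\le-2\) plus a nonpositive interaction term. The task is to make that interaction term quantitatively negative using finite data.

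\textbf{Step 1 (converting the interaction into a probe).} Using premises (A1)--(A3), I would express the interaction sum at the first-contact time through an off-zero logarithmic derivative \(H_t'/H_t\) evaluated at translated points \(x_0+iy_0+\tau_j\). Premise (A3) localises the highest zero, so that it is uniformly the first contact. Premise (A2) reduces the zero sum to a finite window in \(x\) plus a tail. Premise (A1) replaces \(H_t\) in that window by Raymond's model together with an explicit source error bound. Each of the 57 fixed kernels of Appendix~\ref{app:kernels} is a nonnegative combination of translated observations. Combined, they give an inequality \(\tfrac{d}{dt}q\le-2-c_k(q,t)+E_k(q,t)\). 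Here \(c_k\) is a rational lower bound for the collective gain. The term \(E_k\) collects the physical error allowances, the exceptional debit and the additive cone reserve.

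\textbf{Step 2 (the refined source).} For the tail I would invoke the explicit finite Weyl estimate and the centred cone transfer of Section~\ref{sec:exponent}. These bound the positive first-head tail and its centred logarithmic moment. They enter \(E_k\) by replacing Raymond's tail term. The change in the phase-surplus credits is charged so that every original allowance is retained. The output is two complete providers, valid on the prescribed height grids.

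\textbf{Step 3 (directed comparison).} Partition the \(q\)-range from \(\qstart\) down to a terminal \(\qend\) into the 19,023 steps of Section~\ref{sec:certificate}. On each step, pick the kernel and provider that give the best certified rate lower bound \(r_i\), meaning \(-\tfrac{dq}{dt}\ge r_i\) on that cell. The positivity of each kernel's combination on its cell is verified by rational Bernstein certificates. The tail enclosures are evaluated with directed rounding at 192 and 256 bits and re-audited in exact rationals. Summing the elapsed-time bounds \(\Delta q_i/r_i\) and adding the terminal contribution \(\qend/2\), which comes from plain contraction \(\dot q\le-2\), yields a total time \(T\le\Lambda_\ast\). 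By this time every zero is real. Since \(t\ge T\) implies only real zeros, it follows that \(\Lambda\le T\).

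\textbf{Main obstacle.} The hard step is Step 1 together with the uniformity in Step 3. The issue is guaranteeing that each per-cell rate bound holds for the true function at every time in which \(q\) lies in that cell, and not merely at grid points. I would handle this by using monotonicity of \(q\) together with the monotonicity in \(q\) and \(t\) of the certified quantities on each cell. Each cell is then evaluated at its worst corner, and the Bernstein certificates cover the continuum in the translation parameters.
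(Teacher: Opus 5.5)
Your outline has the right ingredients (Lemma~\ref{lem:dynamics}, translated probes, the refined providers, the final \(\qend/2\) of ordinary contraction). However, the global step that turns a pointwise rate into a bound on \(\Lambda\) is missing, and Step~3 as written would not go through. Although you mention a first contact in Step~1, you never set up a barrier for it to be a contact with; Step~3 instead integrates rate bounds for the maximal height. The rate \(q'<-2-cq\) of Proposition~\ref{prop:rate} holds only at a \emph{simple} zero of maximal height, and only at an abscissa \(x\ge X\) where the source floors are asserted. The maximal height is not a priori a differentiable function you can integrate. The supremum of the heights need not be attained (at \(t=0\) you only know it is at most \(1\)), the top zero may be multiple, and the highest branch can switch. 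The paper therefore never differentiates the maximal height. It builds an explicit barrier \(\sqrt{\mathcal F_c(q,u)}\) from the exact solutions \eqref{eq:solution}, with upward dyadic restarts \eqref{eq:restart}. It starts the barrier strictly above the classical strip at \(q_0=(1000001/10^6)^2\) and checks separation on \([0,\delta_0]\) via \eqref{eq:initial}, because (A3) gives contact control only at positive times; your seed \(\qstart=1\) has no strict separation at \(t=0\). It then rules out a first contact. (A3) puts the contact at a finite abscissa, and (A2) with symmetry puts it at \(x>X\); that, not truncating the zero sum to a finite window, is the role of (A2). A multiple contact is excluded by backward Hermite splitting, and at a simple contact \(q'<-2-cq\) contradicts the barrier slope \(-2-cq\). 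Without this argument your per-cell bounds \(-\mathrm dq/\mathrm dt\ge r_i\) are unjustified.

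Two further pieces are wrong or missing. First, the translated observation \(\bar S_{t,h,r}\) contains the distinguished pair's own contribution \(s_r(q/h^2)/h\). The kernel inequality \eqref{eq:kernelineq} can only be summed over the other pairs, since it excludes the singular point \(d=0\), \(b=y\). The rate is therefore \(c=(2\lambda/h)(\ell-M/h)\), with a cap \(M\) valid for all \(0\le w\le\rho^2\) so that it covers the whole step, subject to the eligibility conditions \eqref{eq:eligible}. Your \(E_k\) collects source-side allowances, which are already paid inside \(\ell\), but omits this own-pair charge, so your rate would be too large. Second, the uniformity fix you propose is not available. The floors \(\ell_{p,i,r}\) are tabulated per closed time cell \(I_i\) and consumed only with their own provider, height and cell. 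They are never interpolated, and they have no monotonicity in \(t\) that would let you evaluate a ``worst corner''. The paper instead advances in fixed time substeps \(\Delta t=1/128000\), 32 per cell, so that every segment lies in one cell. Since \(q\) decreases along a segment, the entry condition \(\qstart\le\rho^2h^2\) keeps the kernel domain valid throughout. The 19,023 records are these time substeps, not a partition of the \(q\)-range. The stated rational is the output of that specific history, built from exact segments with \(E_{18}\) upper bounds, dyadic restarts and the terminal \(\tau_{\mathrm{up}}\). Summing \(\Delta q_i/r_i\) with constant rates is a different scheme, and its value is neither the audited one nor shown to be at most \(\Lambda_\ast\).
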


The conclusion is an implication under explicit analytic premises.
The rational certificate proves the numerical comparison once those
premises are supplied; it does not prove its own interpretation as a
statement about \(H_t\). This distinction is part of the theorem, and
not merely a qualification of its numerical presentation.
The ceiling in \eqref{eq:main} is the smallest of the classical-strip
comparisons certified in this paper. A claim that it is an established
unconditional record would require validation of the complete analytic
assembly.

The paper has three complementary contributions. A complete source
portfolio is converted into a globally valid strip comparison with an
independently checked terminal ceiling. A matched logarithmic tail
transfer preserves the coefficient overlap that survives a cutoff.
Finally, a centred cone transfer combines an explicit finite Weyl
estimate with the retained phase credits and yields the numerical
gain in Section~\ref{sec:exponent}. These are alternative complete
source suppliers; their gains are not added to one another.

\section{Collective contraction and translated observations}
\label{sec:contraction}

For \(h>0\), define the conjugate-pair kernel
\begin{equation}\label{eq:K}
 K_h(d,b)=\frac{h-b}{d^2+(h-b)^2}
         +\frac{h+b}{d^2+(h+b)^2}.
\end{equation}
At a simple zero \(z_0=x+iy\), \(y>0\), whose imaginary height is
maximal among all zeros, write the other conjugate pairs as
\(a\pm ib\), \(0\le b\le y\). A real zero has half-pair weight, so
that it is counted once. The pair containing \(z_0\) is excluded from
the following sums.

\begin{lemma}[Exact highest-zero identity]\label{lem:dynamics}
At such a simple zero, with \(q=y^2\),
\begin{equation}\label{eq:dynamics}
 q'=-2-4y\sum_{\mathrm{other\ pairs}}K_y(x-a,b)
    =-2-4q\,G_t,\qquad
 G_t=\frac1y\sum_{\mathrm{other\ pairs}}K_y(x-a,b)\ge0.
\end{equation}
The imaginary parts of the paired zero sums converge absolutely.
\end{lemma}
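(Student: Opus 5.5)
The plan is to differentiate the zero along the heat flow. Since \(\partial_tH_t=-\partial_z^2H_t\) and \(z_0(t)\) is a simple zero, the implicit function theorem gives a smooth zero path near \(t\). Differentiating \(H_t(z_0(t))=0\) yields
\[
 z_0'=-\frac{\partial_tH_t(z_0)}{H_t'(z_0)}=\frac{H_t''(z_0)}{H_t'(z_0)} .
\]
I will express \(H''/H'\) at a simple zero through the Hadamard product. \(H_t\) is even, real and of order one (indeed of order less than two in \(z^2\)), so it can be written as a genus-zero product in \(z^2\):
\[
 H_t(z)=H_t(0)\prod_\rho\Bigl(1-\frac{z^2}{\rho^2}\Bigr),
\]
taken over zeros \(\rho\) with \(\Rea\rho>0\), or equivalently as a symmetric product over all zeros with \(\rho\) and \(-\rho\) paired. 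If \(H_t(0)=0\), one factors out the appropriate power of \(z\) first. Writing \(H=(z-z_0)F\) gives \(H''(z_0)/H'(z_0)=2F'(z_0)/F(z_0)\). Hence
\[
 z_0'=2\sum_{\rho\ne z_0}\frac{1}{z_0-\rho},
\]
where the sum is taken in the symmetric sense, grouping \(\rho\) with \(-\rho\) and \(\rho\) with \(\bar\rho\).

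The next step is to take imaginary parts and reorganise the sum. First, \(\Ima\frac{1}{z_0-\bar z_0}=-\frac{1}{2y}\), which contributes \(-1/y\) to \(y'\) and hence \(-2\) to \(q'=2yy'\). For each other conjugate pair \(a\pm ib\),
\[
 \Ima\Bigl(\frac{1}{z_0-(a+ib)}+\frac{1}{z_0-(a-ib)}\Bigr)
 =-\frac{y-b}{(x-a)^2+(y-b)^2}-\frac{y+b}{(x-a)^2+(y+b)^2}
 =-K_y(x-a,b).
\]
A real zero (\(b=0\)) gives only one term \(-\frac{y}{d^2+y^2}=-\tfrac12K_y(d,0)\), which is exactly the half-pair weight convention. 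The mirror zeros \(-a\pm ib\) are themselves other pairs, as is the pair \(-x\pm iy\) unless \(x=0\), and they are included in the sum. Then \(y'=-\frac1y-2\sum K_y\) and \(q'=2yy'=-2-4y\sum K_y\), which is \eqref{eq:dynamics}. Nonnegativity of \(G_t\) follows because \(y-b\ge0\) by maximality of \(y\), so every term of \(K_y\) is \(\ge0\).

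The main obstacle is justifying convergence and the rearrangement, since the real parts \(\sum 1/(z_0-\rho)\) converge only conditionally under symmetric pairing. I would handle this by noting that \(K_y(d,b)\le 2(y+b)/d^2\le 4y/d^2\) for large \(|d|\), with \(b\le y\) uniformly bounded. The zero counting \(N(T)=O(T\log T)\) gives \(\sum 1/a^2<\infty\), so the imaginary-part sum converges absolutely. This legitimises splitting the imaginary part of the symmetrically convergent logarithmic-derivative series term by term and regrouping it into conjugate pairs.
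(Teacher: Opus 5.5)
Your proposal is correct and follows essentially the same route as the paper. Both arguments use the same steps:
\begin{itemize}
\item implicit differentiation giving \(z_0'=H_t''(z_0)/H_t'(z_0)\);
\item the canonical product, which turns this into twice the sum over the other zeros;
\item the own conjugate contributing \(-2\) to \(q'\), and each other conjugate pair contributing \(-4yK_y\), with a real zero counted at half-pair weight;
\item nonnegativity from \(b\le y\);
\item absolute convergence of the imaginary parts from the \(O(y/a^2)\) decay together with zero counting.
\end{itemize}
Your additional details fill in steps the paper states only briefly. These are the factorisation \(H=(z-z_0)F\), the genus-zero product in \(z^2\), and the use of absolute convergence of the imaginary parts to justify regrouping into conjugate pairs.
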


\begin{proof}
Implicit differentiation of \(H_t(z_0(t))=0\) gives
\(z_0'=H_t''(z_0)/H_t'(z_0)\).
The canonical-product zero-motion identity gives twice the regular
zero sum at \(z_0\); any real constant in the logarithmic derivative
has no imaginary contribution. The own conjugate contributes
\(2/(z_0-\bar z_0)=-i/y\).
An additional conjugate pair contributes
\(-2K_y(x-a,b)\) to \(y'\), and a real zero contributes half
this expression with \(b=0\).
Multiplication by \(2y\) proves \eqref{eq:dynamics}.
Absolute convergence of the imaginary parts follows from the paired
product representation and the zero-counting estimate; the summands
at large real abscissa are \(O(y/|a|^2)\).
Every numerator in \eqref{eq:K} is nonnegative when \(b\le y\).
The zero-motion and product framework is the one in
\cite{polymath,raymondfoundations}.
\end{proof}

Above all zeros, put
\begin{equation}\label{eq:S}
 S_t(u,h)=-\Ima\frac{H_t'(u+ih)}{H_t(u+ih)},\qquad
 \bar S_{t,h,r}(x)=\frac{S_t(x-rh,h)+S_t(x+rh,h)}2.
\end{equation}
The paired product expresses \(S_t\) as the sum of \(K_h\), with the
same real-zero convention. Define
\[
 J_{h,r}(d,b)=\frac{K_h(d+rh,b)+K_h(d-rh,b)}2.
\]
A fixed kernel certificate is a rational triple
\((r,\rho,\lambda)\), \(0<\rho<1\), \(0<\lambda\le2\), such that
\begin{equation}\label{eq:kernelineq}
 2K_y(d,b)\ge\lambda\frac yh J_{h,r}(d,b)
 \quad(0<y\le\rho h,\ 0\le b\le y,\ d\in\mathbb R).
\end{equation}
The distinguished singular point \(d=0,b=y\) is excluded. That pair
is removed before the inequality is summed.

\subsection{The complete distinguished-pair subtraction}
The own pair contributes
\begin{equation}\label{eq:self}
 \frac1h\,s_r(q/h^2),\qquad
 s_r(w)=\frac{2(1+r^2-w)}
                  {(1+r^2-w)^2+4r^2w}
\end{equation}
to \(\bar S_{t,h,r}(x)\).
A rational cap \(M\) satisfying \(s_r(w)\le M\) for every
\(0\le w\le\rho^2\) is verified by nonnegativity of
\begin{equation}\label{eq:cap}
 M w^2+\{M(2r^2-2)+2\}w+
              M(1+r^2)^2-2(1+r^2).
\end{equation}
The endpoints and any interior minimum of this quadratic are checked
exactly. A cap at one contact height would not justify its use over a
whole comparison step.

\begin{proposition}[Complete source to contraction rate]\label{prop:rate}
Suppose \(\bar S_{t,h,r}(x)>\ell\) on the full contact domain and
\(q\le\rho^2h^2\). If the cap above is valid and
\[
 c=\frac{2\lambda}{h}\left(\ell-\frac Mh\right)>0,
\]
then at a simple highest-zero contact
\begin{equation}\label{eq:rate}
 q'<-2-cq.
\end{equation}
\end{proposition}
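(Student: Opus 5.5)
The plan is to start from the exact identity of Lemma~\ref{lem:dynamics}, \(q'=-2-4y\sum_{\mathrm{other\ pairs}}K_y(x-a,b)\), and bound the sum over the other pairs from below. To do this we sum the kernel certificate \eqref{eq:kernelineq} over all pairs except the distinguished one, and then compare the result with the observed average \(\bar S_{t,h,r}(x)\).

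\textbf{Step 1 (kernel comparison).} Because \(q\le\rho^2h^2\), we have \(0<y\le\rho h\). Every other pair satisfies \(0\le b\le y\), since \(z_0\) is highest. So \eqref{eq:kernelineq} applies termwise with \(d=x-a\). Real zeros carry half weight on both sides, so the convention is compatible. Summing gives
\[
 2\sum_{\mathrm{other}}K_y(x-a,b)\ \ge\ \lambda\frac yh\sum_{\mathrm{other}}J_{h,r}(x-a,b).
\]
Both sums converge absolutely, by Lemma~\ref{lem:dynamics} and the corresponding statement for \(S_t\) at height \(h\) above all zeros. The excluded singular point \(d=0,b=y\) is exactly the distinguished pair, which has been removed.

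\textbf{Step 2 (subtracting the own pair).} The paired product gives \(\bar S_{t,h,r}(x)=\sum_{\mathrm{all}}J_{h,r}(x-a,b)\), where the own pair contributes \(h^{-1}s_r(q/h^2)\) by \eqref{eq:self}. Since \(w=q/h^2\in[0,\rho^2]\), the cap yields \(s_r(w)\le M\). The cap holds on the whole interval, so it is valid at whatever \(q\) occurs during the comparison step. Using the hypothesis \(\bar S>\ell\) on the contact domain, we get
\[
 \sum_{\mathrm{other}}J_{h,r}>\ell-\frac Mh.
\]
The one point needing care is to check that \(s_r\) is indeed the own-pair value of \(J_{h,r}\) at \(d=0,b=y\). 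The sum of the two kernel terms at \(d=\pm rh\) simplifies to \eqref{eq:self}, and this identification is where I would spend verification effort.

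\textbf{Step 3 (assembly).} Combining the two steps,
\[
 4y\sum_{\mathrm{other}}K_y\ \ge\ 2\lambda\frac{y^2}{h}\sum_{\mathrm{other}}J_{h,r}\ >\ \frac{2\lambda}{h}\Bigl(\ell-\frac Mh\Bigr)q=cq.
\]
The inequality is strict because \(\lambda>0\), \(q>0\) and the hypothesis on \(\bar S\) is strict. Substituting into \eqref{eq:dynamics} gives \(q'<-2-cq\). The assumption \(c>0\) is not needed for the inequality itself; it only makes the rate a genuine improvement.

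The main obstacle is bookkeeping rather than analysis. It consists of confirming the own-pair formula in Step 2 and ensuring that the premises (A1)--(A3) supply \(\bar S_{t,h,r}(x)>\ell\) at the actual contact abscissa \(x\). That is why the hypothesis is stated on the full contact domain.
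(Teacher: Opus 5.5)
Your proposal is correct and follows the paper's own proof: you sum \eqref{eq:kernelineq} over the non-distinguished pairs, use Lemma~\ref{lem:dynamics} to reach \(q'\le-2-\frac{2\lambda}{h}q\bigl(\bar S_{t,h,r}(x)-s_r(q/h^2)/h\bigr)\), and then apply the source floor and the whole-range cap. The own-pair identification you flagged does hold: at \(d=0,\ b=y\) the two translated terms coincide, and with \(w=q/h^2\) the kernel \(K_h(rh,y)\) simplifies exactly to \(h^{-1}s_r(w)\) as in \eqref{eq:self}.
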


\begin{proof}
Sum \eqref{eq:kernelineq} over all pairs except the own pair.
Equations \eqref{eq:dynamics} and \eqref{eq:S} give
\[
 q'\le-2-\frac{2\lambda}{h}q
          \left(\bar S_{t,h,r}(x)-\frac{s_r(q/h^2)}h\right).
 \]
Use the complete source floor and the whole-range cap.
\end{proof}

\subsection{Rational certification of the fixed kernels}
\label{subsec:bernstein}
The 57 triples comprise Raymond's original 13 certificates and 44
additional certificates from \cite{raymondretuned}. We independently
check the entire catalogue.
After scaling \(h\) to one, put
\[
 D=d^2/h^2,\quad v=y^2/h^2,\quad B=b^2/h^2,\quad R=r^2,
\]
and define
\begin{align*}
 C&=D+R+1+B, & A&=C^2+4DR-4B,\\
 D_4&=A^2-16DRC^2, & G&=A(C-2B)-8DRC,\\
 Q_*&=(D+v-B)^2+4DB.
\end{align*}
Combining the two translations before clearing denominators gives
\begin{equation}\label{eq:poly}
 P=(D+v-B)D_4-\frac{\lambda}{2}Q_*G.
\end{equation}
The inequality \(P\ge0\) is equivalent to
\eqref{eq:kernelineq}. Indeed,
\(K_y/y=2(D+v-B)/Q_*\) and \(J_{1,r}=2G/D_4\).
The translated denominators are positive since \(b/h<1\);
\(Q_*\) vanishes only at the excluded singular point.

Set \(D=z/(1-z)\), \(v=\rho^2p\), \(B=\rho^2pu\). The polynomial
\[
 \widetilde P(z,p,u)=(1-z)^5
       P\bigl(z/(1-z),\rho^2p,\rho^2pu\bigr)
\]
has degree at most five in each variable. Its required domain is the
closed cube \([0,1]^3\); \(z=1\) is included by polynomial continuation.
For a power-coefficient vector \(a\) of degree at most five, the
one-variable Bernstein coefficients are
\[
 b_k=\sum_{j=0}^k a_j\,\frac{\binom kj}{\binom5j}.
\]
Apply this conversion along each coordinate. If all tensor Bernstein
coefficients on a box are nonnegative, the polynomial is nonnegative
there. Half-box restrictions are obtained from the exact de Casteljau
formulas
\[
 b_k^{L}=2^{-k}\sum_{j=0}^k\binom kj b_j,\qquad
 b_k^{R}=2^{-(5-k)}\sum_{j=k}^5\binom{5-k}{j-k}b_j.
\]

The independent implementation reconstructs \eqref{eq:poly} by rational
polynomial arithmetic. At each unresolved box it splits into both
children, and accepts a leaf only when every Bernstein coefficient is
nonnegative. Integer rescaling uses positive factors only.
The resulting covering partitions have 434 leaves in total:
41 for the original 13 kernels and 393 for the new 44.
Every quadratic cap \eqref{eq:cap} also passes on its full declared
interval. No stored polynomial, subdivision tree or Raymond verifier is
imported by this implementation.

\section{Complete source providers and analytic premises}
\label{sec:premises}

The original numerical inputs are the complete providers
\texttt{SOURCE\_OLD\_ENHANCED.json} and
\texttt{SOURCE\_RETUNED.json} from the retuning evidence
\cite{raymondretuned}. They are stored under
\texttt{inputs/original/} in the certificate package.
Section~\ref{sec:exponent} derives augmented complete providers,
stored under \texttt{inputs/refined/} with the same filenames.
The observation schedules and kernel catalogue are identical.
Each provider has 608 closed time cells
\begin{equation}\label{eq:cells}
 I_i=[i/4000,(i+1)/4000],\qquad 0\le i<608,
\end{equation}
covering \([0,19/125]\).
Each cell carries its own rational observation height
\(h_{p,i}\), with provider index \(p\in\{0,1\}\), and a complete source
floor \(\ell^{\mathrm{old}}_{p,i,r}\) for every retained shift.
The refined floor satisfies
\(\ell^{\mathrm W}_{p,i,r}\ge\ell^{\mathrm{old}}_{p,i,r}\).
In the main comparison, \(\ell_{p,i,r}\) denotes the accepted
refined floor; the superscripts distinguish the controlled comparisons.
The older schedule has heights at most \(109/50\); the retuned
schedule has heights in \([69/100,47/25]\).
The comparison consumes a floor only with its owning provider, height
and time cell.

Let
\[
 X=6000000185827,\qquad N_0=690988,\qquad
 L=\log(x/(4\pi)),\qquad s=L/2-\log N_0.
\]
The six frequency classes are
\[
 [0,1/20],\ [1/20,3/20],\ [3/20,2/5],\
 [2/5,1],\ [1,2],\ [2,\infty).
\]
They cover the physical range \(x\ge X-2\).
For centres \(x\ge X\) and displacements \(|rh|<2\), the source
construction treats all eleven same or adjacent class pairs
\((j,j)\), \(0\le j\le5\), and \((j,j+1)\), \(0\le j<5\).
The stored common floor is the minimum over these complete pair cases.
The final unbounded class includes every future native cutoff.

\subsection{The true-function source correspondence}
The finite source construction uses
\[
 b_t(n)=\e^{t(\log n)^2/4},\qquad
 E_t(s)=\prod_{p\in\{2,3,5\}}(1-b_t(p)p^{-s}),
\]
and the physical leading exponent
\[
 s_U^0=(1+h)/2+tL/4-i(x/2+\pi t/8).
\]
Both finite branches and the complete reflected and product-tail
labels are retained. Its normalised finite model has the form
\begin{equation}\label{eq:labelled}
 Q=1+\sum_{j>0}a_jz_j,\qquad
 V=\sum_{j>0}\omega_j a_jz_j,\qquad
 Q_h=-V/2,
\end{equation}
with \(a_j,\omega_j\ge0\), \(|z_j|=1\).
Denominator exclusion must precede division.
The physical transfer has the form
\begin{equation}\label{eq:physical}
 S_t(x,h)>P-\tfrac12\Rea(V/Q)+\Delta_E,\qquad
 P=\ell_{\mathrm{base}}+C/2,\quad \Delta_E\ge0,
\end{equation}
where the base floor has already paid the complete physical value,
derivative and edge-pair errors. The Euler phase deficit
\(\Delta_E\) and all selected finite-coefficient deficits are treated
jointly at the translated observations.

The analytic derivation is supplied in
\cite{raymondfoundations,raymondmatched,raymondretuned}.
It extends the approximation to observation heights above one.
Its Cauchy estimate concerns the holomorphic error on a frozen-cutoff
disk, including entering pairs and transition allowances.
Applying a restricted-height approximation or differentiating a
changing truncation would not establish these estimates.

\subsection{Premises of Theorem~\ref{thm:main}}
\label{subsec:ledger}
We state the remaining interfaces in the precise form consumed by the
comparison.
\begin{enumerate}[label=\textbf{(A\arabic*)}]
\item\label{prem:source}
\emph{Complete model-to-function source correspondence.}
For each original provider, cell and shift, the corrected approximation,
the labelled correspondence \eqref{eq:labelled}, denominator exclusion
and physical transfer \eqref{eq:physical} hold uniformly on the full
physical domains. The complete amplitude and moment enclosures,
exceptional debits, allocated paired phase credits and physical error
allowances of the original source assembly are valid there. In particular,
\[
 \bar S_{t,h_{p,i},r}(x)>\ell^{\mathrm{old}}_{p,i,r}
       \quad(t\in I_i,\ x\ge X).
\]
The assertion includes both translated observations, all eleven class
pairs, retained reflected and product-tail labels, and every future
activation in the unbounded class. Every quantity is used at its owning
provider, height and time cell. These complete interfaces, together with
the finite enclosures and transfer proved in Section~\ref{sec:exponent},
imply the refined source inequalities for \(\ell^{\mathrm W}_{p,i,r}\).
Only the earlier tabulated lower bounds, without their complete
model-to-function correspondence and charged allowances, would not
license that refinement.
\item\label{prem:wall}
\emph{Finite localisation.}
The retained finite-head wall is valid for the true function on
\[
 x=\pm X,\qquad |\Ima z|\le1,\qquad 0\le t\le41/250,
\]
and the finite-RH input and homotopy argument exclude nonreal zeros
from \(|\Rea z|\le X\) throughout this time range.
The wall must use the corrected broad remainder allowance described
below.
\item\label{prem:contact}
\emph{Uniform global contact reduction.}
On each compact positive-time interval used in the comparison, the
positive-time real-simple tail, confinement and local zero-motion
framework supply compactness for a prospective first contact.
The backward Hermite splitting rule applies at a multiple zero.
Together these statements justify the first-contact argument for the
piecewise smooth positive strip boundary with upward restarts.
\end{enumerate}
Premise (A1) is the complete true-function source correspondence,
not merely a property of a JSON file or an assumption that the
refined numerical floors already hold. Premises (A2)--(A3) refer to the written analytic
localisation and contact arguments, not to a PASS marker.
The supplied evidence supports these premises, but their complete
mathematical validation has not been independently completed in this
audit.

\subsection{Finite-RH clearance and the corrected wall}
Theorem 1 of Platt--Trudgian \cite{pt} verifies RH through
\(T_*=3000175332800\). The required ordinate is \(X/2\), with the
stronger exact clearance
\[
 T_*-(X+1)/2=175239886>0.
\]
The finite-RH computation is imported, not rerun.
For the retained wall the ideal floor is \(79/100\); the ideal
displacement is below \(1/100\).
The corrected broad remainder and the remaining physical allowances
are bounded above by \(1/180\), giving
\begin{equation}\label{eq:wall}
 |H_t/B_t|>79/100-1/100-1/180
             =697/900>3/4.
\end{equation}
Here \(B_t\) is the nonzero physical normaliser of the supplied
approximation. The older narrow wall-error field is not a premise.

For clarity, the corrected Gaussian comparison in
\cite[Appendix B]{raymondretuned} uses \(a=11/12\) rather than
the historical \(5/6\).
Writing \(D=T-3\), its complete Gaussian expectation is
\begin{equation}\label{eq:gaussian}
 \int_{\mathbb R}\exp\!\left(\frac{u^2+a}{D}+cu\right)
        \frac{\e^{-(u-\sigma)^2/t}}{\sqrt{\pi t}}\dd u
 =
 \frac{\exp\!\left\{a/D+
       \dfrac{\sigma^2/D+\sigma c+tc^2/4}{1-t/D}\right\}}
      {\sqrt{1-t/D}},\quad D>t>0.
\end{equation}
Completing the square proves the identity; its \(t=0\) value is
obtained by continuity.
The source domains include \(x\ge X-4\), \(0.54\le y\le2.41\)
on the Cauchy disks and \(-1\le\sigma\le2\) for the all-real-abscissa
remainder.
The safety checks verify the associated rational constants and the
complete error-row allowances. They do not mechanically establish
the imported all-real-abscissa expansion.

\section{The directed classical-strip certificate}
\label{sec:certificate}

Fix the strict initial squared-height enclosure and terminal target
\begin{equation}\label{eq:parameters}
 q_0=\left(\frac{1000001}{1000000}\right)^2,\qquad
 \qend=10^{-8},\qquad
 \Delta t=\frac1{128000}.
\end{equation}
Thus each source cell is divided into 32 substeps.
At the beginning of a substep with incoming rational state \(\qstart\),
a candidate consists of one complete provider and one fixed kernel.
It is eligible only if
\begin{equation}\label{eq:eligible}
 \qstart\le\rho^2h^2,\quad |rh|<2,\quad
 c=\frac{2\lambda}{h}\left(\ell-\frac Mh\right)>0.
\end{equation}
The producer chooses the largest eligible rational \(c\), with a
deterministic ordering of ties. This is a finite local selection
rule, not a proof of global optimality.

The older provider is essential at the classical starting height.
The largest catalogue range parameter is \(19/40\), so the retuned
initial observation height \(47/25\) permits at most
\[
 (47/25)^2(19/40)^2=797449/10^6<1<q_0.
\]
It therefore supplies no eligible initial kernel for this seed.
The first accepted record uses the older height \(109/50\) and
the triple \((r,\rho,\lambda)=(19/40,23/50,2)\).
Its full rate is \(302869653560843/29702500000000\).
As the comparison height decreases, the retuned provider becomes
eligible and is selected when its complete rate is larger.

With the selected constant rate, the exact comparison solution is
\begin{equation}\label{eq:solution}
 \mathcal F_c(q,u)=\left(q+\frac2c\right)\e^{-cu}-\frac2c.
\end{equation}
For \(0<x<1\), the even Taylor polynomial
\[
 E_{18}(x)=\sum_{k=0}^{18}\frac{(-x)^k}{k!}
\]
is an upper bound for \(\e^{-x}\), by the alternating-series
remainder.
The producer restarts upward at
\begin{equation}\label{eq:restart}
 q_{\mathrm{next}}=
 2^{-96}\ceil{2^{96}
     \left\{\left(\qstart+\frac2c\right)E_{18}(c\Delta t)
                         -\frac2c\right\}}.
\end{equation}
Every nonterminal restart obeys
\(\qend<q_{\mathrm{next}}<\qstart\).
The kernel therefore remains valid throughout the decreasing
exact segment.

\subsection{Strict separation near time zero}
Take \(\delta_0=10^{-8}\).
For the first selected rate, the exact check is
\begin{equation}\label{eq:initial}
 q_0-(2+c_0q_0)\delta_0>1,\qquad 0<\delta_0<\Delta t.
\end{equation}
Since \(\mathcal F_{c_0}(q_0,t)\) is decreasing while positive and
its derivative is at least \(-2-c_0q_0\), it remains above one on
\([0,\delta_0]\). Classical strip control thus gives strict
separation there. Uniform positive-time contact control is used
after \(\delta_0\); no positive-time theorem is extrapolated back
to \(t=0\).

\subsection{The terminal step}
When a full substep would reach \(\qend\), the exact time required
within that step is
\[
 \tau_*=\frac1c\log(1+z),\qquad
 z=\frac{c(\qstart-\qend)}{2+c\qend}.
\]
The producer uses the odd degree-11 polynomial
\begin{equation}\label{eq:terminal}
 \tau_{\mathrm{up}}=
 \frac1c\sum_{k=1}^{11}\frac{(-1)^{k+1}z^k}{k},
 \qquad 0<z<1,\quad 0<\tau_{\mathrm{up}}\le\Delta t.
\end{equation}
It is an upper bound for \(\tau_*\).
At the terminal time the strip has squared half-height at most
\(\qend\). De Bruijn contraction then spends the full additional
\(\qend/2\) to make all zeros real.
For a terminal substep beginning at \(t_n=n\Delta t\), the final
ceiling is
\begin{equation}\label{eq:ceiling}
 U=10^{-15}\ceil{10^{15}(t_n+\tau_{\mathrm{up}}+\qend/2)}.
\end{equation}
The terminal source interval and the wall time range both contain
the required times.

\begin{proof}[Proof of Theorem~\ref{thm:main}]
Proposition~\ref{prop:refinedsources} converts the complete
model-to-function premise (A1) and the finite tail certificate into
the refined source inequalities consumed below.
Construct the piecewise exact boundary \(\sqrt{\mathcal F_c(q,u)}\),
with upward restarts \eqref{eq:restart}.
It is initially strictly above all zeros by \eqref{eq:initial}.
Suppose a first positive contact occurs on a decreasing segment.
Premise (A3) localises the contact to a finite abscissa.
Premise (A2) and symmetry place an upper-right contact at \(x>X\).

At a multiple nonreal contact, backward Hermite splitting produces
an upward displacement of order \(\sqrt{\eps}\) at time \(t-\eps\).
The positive smooth comparison boundary changes by \(O(\eps)\).
This contradicts the definition of first contact.
At a simple contact, \eqref{eq:eligible} ensures that the entire
kernel domain applies. Premise (A1) and
Proposition~\ref{prop:rate} give \(q'<-2-cq\), whereas the
comparison segment has derivative \(-2-cq\).
A zero branch reaching the boundary from below has the opposite
one-sided derivative requirement, a contradiction.
At a knot, the left segment suffices and the upward restart preserves
separation.

Thus all zeros remain below the comparison strip until its squared
height reaches \(\qend\). Equations \eqref{eq:terminal}--\eqref{eq:ceiling}
and de Bruijn contraction give \(\Lambda\le U\).
The separately audited history evaluates
\(U=148611789321711/10^{15}\), as detailed below.
\end{proof}

\subsection{Independent numerical verification}
The producer imports no Raymond comparison, selector or
elementary-function module.
The separate auditor imports neither the producer nor its selector.
All substantive comparisons are performed with Python's exact
integer and rational arithmetic.
For the exponential enclosure it uses degree 20:
\[
 E_{20}(x)=\sum_{k=0}^{20}\frac{(-x)^k}{k!}
             \ge\e^{-x}.
\]
It requires every recorded restart to be at least the corresponding
degree-20 enclosure, as well as to lie on the declared dyadic grid.
For the terminal logarithm it independently sets \(w=z/(2+z)\) and
uses
\begin{equation}\label{eq:auditorlog}
 \log(1+z)\le
 2\sum_{k=0}^{6}\frac{w^{2k+1}}{2k+1}
       +\frac{2w^{15}}{15(1-w^2)}.
\end{equation}
The positive-tail bound follows by replacing all omitted denominators
by 15.

The accepted main history has 19,023 records.
Provider 0 supplies 8,512 steps and provider 1 supplies 10,511.
The last record is substep 14 of cell 594, beginning at
\[
 t_n=19022/128000=0.148609375.
\]
Its incoming squared height is approximately
\(4.8287726951\times10^{-6}\), and its rational terminal lag is
approximately \(2.4093217100\times10^{-6}\).
These two approximations are descriptive only; the history stores
and checks their full rational values.
The rates used range approximately from \(10.1968\) to \(22.1778\).

The auditor checks the source identities, all closed time cells,
continuous record ordering, source-provider ownership, observation
heights, frequency-pair coverage, kernel eligibility, complete
own-pair caps, positive rates, initial separation, all upward
restarts, the terminal lag and the full ordinary strip cost.
Nine corrupted histories are rejected, including an omitted own-pair
charge, an inflated source floor, a changed height, a time gap and
a downward restart.
These negative controls test the certificate consumer; they are not
additional analytic premises.

\section{Controlled comparisons and the verification boundary}
\label{sec:comparisons}

The following comparisons share the classical seed
\eqref{eq:parameters}, the same 32 substeps per source cell, the same
global own-pair cap policy and fixed shifts only.
Their histories pass the same independent auditor.
\begin{table}[ht]
\centering\small
\begin{tabularx}{\textwidth}{@{}Xr@{}}
\toprule
Available complete estimates & Directed ceiling\\
\midrule
Older matched source; original 13 kernels
 & \(0.149056792554129\)\\
Same source; all 57 kernels
 & \(0.148639484094797\)\\
Both complete original providers; all 57 kernels
 & \(0.148633549774090\)\\
Both complete providers with finite Weyl refinement; all 57 kernels
 & \(0.148611789321711\)\\
\bottomrule
\end{tabularx}
\caption{Controlled classical-strip comparisons. Every row has the
same analytic premises (A1)--(A3).}\label{tab:controlled}
\end{table}
Within this sequence, the enlarged kernel catalogue accounts for
\(0.000417308459332\), and access to the second complete provider
adds \(0.000005934320707\).
The finite Weyl source refinement then contributes the controlled
decrease \(0.000021760452379\).
The total reduction across these four complete computations is
\(0.000445003232418\).
These are differences between complete computations, not independent
source contributions that may be added to another calculation.
The final ceiling is also \(0.000783284505977\) below the classical
ceiling reported in \cite{raymondclassical}; that comparison changes
more than one component of the construction.

No estimate is interpolated between observation heights.
When switching providers, the whole source estimate is switched:
its amplitude and moment bounds, denominator, physical errors,
class-pair minima and height are never assembled from different
providers. No continuum-shift estimate is used.
These restrictions make the finite certificate directly reviewable.

\begin{table}[ht]
\centering\small
\begin{tabularx}{\textwidth}{@{}p{0.34\textwidth}X@{}}
\toprule
Layer & Verification established here\\
\midrule
Collective and translated algebra &
Written derivations in Section~\ref{sec:contraction}.\\
Full kernel catalogue &
Independent rational polynomial proof of 57 kernels and full caps.\\
Scalar ceiling &
19,023 exact steps accepted by a separately written auditor.\\
Original source reconstruction &
Stored interval evidence reassembled and checked; original
transcendental production not repeated.\\
Finite Weyl tail refinement &
Fresh directed tail and mass production at 192 and 256 bits;
complete rational cone, credit-loss and case-minimum checks.\\
Finite-head wall and finite RH &
Retained wall certificates and corrected allowances checked;
original production and finite-RH computation not repeated.\\
Complete true-function assembly &
Explicit premises (A1)--(A3); independent end-to-end analytic
validation remains open in this audit.\\
\bottomrule
\end{tabularx}
\caption{Scope of certification. Code independence at the rational
layer does not establish the shared physical interpretation.}
\label{tab:scope}
\end{table}

The full stored-source replay of the supplied retuning archive,
including regeneration of all eight supplied comparison histories,
completed successfully. The source reconstruction checks
127,680 native acceptance inequalities, 1,284,096 joint conditions,
eight complete production partitions and 26,752 cross-construction
interval overlaps. It reproduces the selected source and 274 changed
height rows; the counts of 9,120 positive head-pair credits and 6,384
matched-logarithmic improvements agree with the deposit.

These original-source checks reconstruct the supplied interval
enclosures and their arithmetic consequences. The separate tail
refinement in Section~\ref{sec:exponent} freshly regenerates its own
transcendental weights and masses at two precisions. Both sets of native
constructions use Arb/FLINT; the geometry, cone assembly and classical
comparison consumers use exact rational arithmetic.
Neither the finite-RH computation nor the external Lean build discussed
in Appendix~\ref{app:qrh} was executed.

\section{A matched logarithmic centred-tail transfer}
\label{sec:tail}

Finite logarithmic matching and oscillatory cancellation address the
same source quotient. To combine them, one must retain the matching
that survives the chosen tail cutoff and pay for the removed terms
in both amplitude and logarithmic moment.
We first recall the positive finite-coefficient structure from
\cite{raymondmatched,raymondretuned}.

\subsection{Formal coefficients and finite matching}
Let \(*\) denote Dirichlet convolution, \(\delta\) its identity and
\(\mathcal Df(n)=(\log n)f(n)\).
All logarithms and exponentials of coefficient sequences below are
formal: each coefficient uses finitely many proper factors.
Put
\begin{equation}\label{eq:coefficients}
 c_t=b_t*\!\!\prod_{p\in\{2,3,5\}}^{*}
            \bigl(\delta-b_t(p)\delta_p\bigr),\qquad
 \widetilde d_t=\mathcal D\log_*c_t .
\end{equation}
For \(t>0\), the heat-weighted infinite numerical Dirichlet series
does not converge at any fixed abscissa. None is evaluated here.

\begin{lemma}[Positive finite coefficients]\label{lem:positive}
For \(t\ge0\), \(c_t(n)\) and \(\widetilde d_t(n)\) are nonnegative,
as are their time derivatives.
Moreover \(c_t(1)=1\), and
\(c_t(p)=\widetilde d_t(p)=0\) for \(p=2,3,5\).
\end{lemma}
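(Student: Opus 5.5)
The plan is to handle \(c_t\) and \(\widetilde d_t\) separately. Both reduce to one structural fact about \(b_t(n)=\e^{t(\log n)^2/4}\): it is \emph{supermultiplicative with a positive cross term},
\[
 b_t(mn)=b_t(m)\,b_t(n)\,\e^{t\log m\log n/2}\ \ge\ b_t(m)\,b_t(n).
\]
The normalisations are immediate. Each factor \(\delta-b_t(p)\delta_p\) equals \(1\) at \(1\), so \(c_t(1)=b_t(1)=1\). At a prime \(p\in\{2,3,5\}\) only the terms \(b_t(p)\) and \(-b_t(p)b_t(1)\) survive, so \(c_t(p)=0\). For \(\widetilde d_t\), the relation \(\mathcal Dc_t=c_t*\widetilde d_t\) together with \(c_t(1)=1\) and \(\mathcal Dc_t(p)=(\log p)\,c_t(p)=0\) gives \(\widetilde d_t(p)=0\).

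\emph{Positivity of \(c_t\).} Write \(n=m\prod_{p\in T}p\), where \(T\subseteq\{2,3,5\}\) is the set of listed primes dividing \(n\). Then
\[
 c_t(n)=\sum_{S\subseteq T}(-1)^{|S|}\Bigl(\prod_{p\in S}b_t(p)\Bigr)\,b_t(n/P_S),\qquad P_S=\prod_{p\in S}p.
\]
With \(x=\log n\) and \(a_p=\log p\), this is an iterated commuting difference \(\prod_{p\in T}\Delta_p\) applied to \(F(x)=\e^{tx^2/4}\), where \(\Delta_pF(x)=F(x)-\e^{ta_p^2/4}F(x-a_p)\). I would prove that each \(\Delta_p\) preserves a cone of functions of the form \(\e^{tx^2/4}\varphi(x)\), with \(\varphi\ge0\) nondecreasing on the relevant range \(x\ge\sum_{p\in T}a_p\). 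A single application gives \(\varphi(x)=1-\e^{-ta_p(x-a_p)/2}\), which is nonnegative and increasing for \(x\ge a_p\). For the iteration I would first try to show that \(\varphi\) stays in the class of functions \(1-\sum\pm\e^{-\text{(linear)}}\) that are Bernstein-type in \(x\), i.e.\ have completely monotone derivative. If that fails, the fallback is the explicit expansion \(\prod_{p\in T}\bigl(1-\e^{-ta_p(x-A_T+\cdots)/2}\bigr)\) obtained by completing squares. Only \(|T|\le3\) occurs, so a direct check of the three-prime case is also available.

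\emph{Time derivatives of \(c_t\).} Differentiation in \(t\) produces the same alternating sum, weighted by the exponents \(\tfrac14\bigl[(x-A_S)^2+\sum_{p\in S}a_p^2\bigr]\). I would handle this by differentiating the cone representation. Equivalently, I would expand \(\e^{t(\cdot)}\) as a power series in \(t\) and show that each Taylor coefficient is a nonnegative multiple of a finite difference of a polynomial with the right monotonicity. Positivity of every \(t\)-coefficient gives nonnegativity of the function and of all its \(t\)-derivatives at once.

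\emph{Positivity of \(\widetilde d_t\).} Since \(\log_*\) converts convolution into sums, I would split
\[
 \widetilde d_t=\mathcal D\log_*b_t+\sum_{p\in\{2,3,5\}}\mathcal D\log_*(\delta-b_t(p)\delta_p).
\]
The second part equals \(-\sum_{k\ge1}k\log p\,b_t(p)^k\delta_{p^k}\). The first part reduces to the claim
\[
 \mathcal D\log_*b_t(p^k)\ \ge\ k\log p\; b_t(p)^k\quad(p\in\{2,3,5\}),
\]
together with \(\mathcal D\log_*b_t\ge0\) everywhere. Because \(b_t\) is a Gaussian-smeared power, \(b_t(n)=\mathbb E[n^{G}]\) with \(G\sim N(0,t/2)\), and supermultiplicative, I expect \(\mathcal D\log_*b_t\) to dominate the \(t=0\) von Mangoldt values with explicit positive corrections. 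I would verify this from the recursion \(\mathcal Db_t=b_t*\mathcal D\log_*b_t\) by induction on the number of prime factors, using the cross term \(\e^{t\log m\log n/2}\) to absorb the negative contributions. The \(t\)-derivative is handled by the same induction on the differentiated recursion.

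\emph{Main obstacle.} I expect the main difficulty to be this last inductive positivity of \(\mathcal D\log_*b_t\), including its strict domination at prime powers of \(2,3,5\). Inclusion–exclusion cancellations in \(\log_*\) are not sign-definite in general. If the direct induction stalls, I would fall back on the positive-coefficient argument established in \cite{raymondmatched,raymondretuned}, which the paper cites for exactly this structure.
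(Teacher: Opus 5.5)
\textbf{There is a genuine gap in the proof for \(\widetilde d_t\), and it lies where you expected the difficulty.}

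\emph{A slip that makes your target false.} We have \(\log_*(\delta-\beta\delta_p)=-\sum_{k\ge1}\beta^k\delta_{p^k}/k\), and \(\mathcal D\) multiplies the \(p^k\) coefficient by \(k\log p\), so the factors \(k\) cancel. Writing \(d_t=\mathcal D\log_*b_t\) and \(\kappa_p=(\log p)^2/4\), the correct formula is
\[
 \widetilde d_t(p^k)=d_t(p^k)-(\log p)\,\e^{k\kappa_pt}\qquad(p\in\{2,3,5\}),
\]
not \(d_t(p^k)-k\log p\,b_t(p)^k\). The inequality you set out to prove, \(d_t(p^k)\ge k\log p\,b_t(p)^k\), is false. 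Since \(d_0\) is the von Mangoldt sequence, at \(t=0\) the left side is \(\log p<k\log p\) for \(k\ge2\). The correct residual vanishes at \(t=0\): \(\widetilde d_0\) is the von Mangoldt sequence restricted to \(n\) coprime to \(30\). So there is no strict domination to exploit.

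\emph{No argument for \(d_t\ge0\).} This is the more serious problem. The recursion \((\log n)b_t(n)=\sum_{d\mid n}d_t(d)b_t(n/d)\) determines \(d_t(n)\) by \emph{subtracting} the lower-order terms, so induction on it cannot propagate signs. The cross factor \(\e^{t\log m\log n/2}\) settles small cases such as \(n=pq\) but gives no general control. Falling back on the cited references is not itself a proof.

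\emph{The missing idea is to differentiate in \(t\).} From \(\partial_tb_t=\mathcal D^2b_t/4\) and \(\mathcal Db_t=b_t*d_t\) one gets \(\partial_t\log_*b_t=(\mathcal Dd_t+d_t*d_t)/4\), hence
\[
 \partial_td_t(n)=\frac{\log^2n}{4}\,d_t(n)+\frac{\log n}{4}\sum_{\substack{ab=n\\ a,b\ge2}}d_t(a)d_t(b).
\]
This is a linear equation whose forcing involves only smaller indices, with nonnegative von Mangoldt initial data. Induction on \(n\) and the integrating factor give \(d_t(n)\ge0\) together with all its time derivatives. At \(n=p^k\) the same equation gives, for \(e_k=\widetilde d_t(p^k)\),
\[
 e_k'=k^2\kappa_pe_k+(k^2-k)\kappa_p(\log p)\e^{k\kappa_pt}+\frac{k\log p}{4}\sum_{j=1}^{k-1}d_t(p^j)d_t(p^{k-j}),\qquad e_k(0)=0.
\]
So \(e_k\) and its derivatives are nonnegative, and \(e_1\equiv0\). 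Then \((\log n)c_t(n)=\sum_{d\mid n,\,d\ge2}\widetilde d_t(d)c_t(n/d)\) gives the claims for \(c_t\) by induction and the Leibniz rule.

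\emph{Your inclusion--exclusion treatment of \(c_t\).} The argument above makes it unnecessary, which matters because it is unfinished:
\begin{itemize}
\item The completely-monotone-derivative invariant already fails for two primes. For \(T=\{p,q\}\) one finds \(\varphi''(a_p+a_q)=\tfrac{t^2}{2}a_pa_q\e^{-ta_pa_q/2}>0\), so \(\varphi'\) is not decreasing.
\item The cross factors \(\e^{ta_pa_q/2}\) prevent an exact product factorisation.
\item The signs of the \(t\)-Taylor coefficients are asserted rather than checked.
\end{itemize}
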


\begin{proof}
Write \(d_t=\mathcal D\log_*b_t\). The derivation rule for
\(\mathcal D\), together with \(\partial_tb_t=\mathcal D^2b_t/4\),
gives
\[
 \partial_td_t(n)=\frac{\log^2n}{4}d_t(n)
       +\frac{\log n}{4}
          \sum_{\substack{ab=n\\a,b\ge2}}d_t(a)d_t(b).
\]
At \(t=0\), \(d_0\) is the nonnegative von Mangoldt sequence.
Induction on \(n\), followed by the integrating-factor formula,
proves positivity, including all time derivatives.
At a selected prime power \(p^k\), put
\(\kappa_p=(\log p)^2/4\) and
\(e_k=d_t(p^k)-(\log p)\e^{k\kappa_pt}\).
For \(k\ge2\),
\[
 e_k'=k^2\kappa_pe_k+
       (k^2-k)\kappa_p(\log p)\e^{k\kappa_pt}
       +\frac{k\log p}{4}
                 \sum_{j=1}^{k-1}d_t(p^j)d_t(p^{k-j}),\quad e_k(0)=0.
\]
The forcing and all its derivatives are nonnegative.
For \(k=1\), the residual is identically zero.
Elsewhere \(\widetilde d_t=d_t\).
Finally,
\[
 (\log n)c_t(n)=
       \sum_{\substack{d\mid n\\d\ge2}}\widetilde d_t(d)c_t(n/d)
\]
proves the claims for \(c_t\) by induction.
\end{proof}

Retain the positive first head through an integer \(J\), together
with all other licensed reflected and product-tail labels, in
\((Q_0,V_0)\). At a fixed physical source point write
\begin{equation}\label{eq:decomposition}
 Q=Q_0+R,\qquad V=V_0+\nu R+E,\qquad
 |R|\le\eta,\quad |E|\le\eps,\quad \nu\ge0.
\end{equation}
The notation \(\eta\) distinguishes the tail allowance from the
kernel range parameter \(\rho\).
Let \(\bar A_0,\bar B_0\) bound the nonconstant amplitude and
weighted moment of the retained core. Choose \(K\le J\) and put
\begin{align}
 T&=\sum_{2\le d\le K}\widetilde d_t(d)d^{-\sigma}z_d,
 &D_K&=\sum_{2\le d\le K}\widetilde d_t(d)d^{-\sigma},
 \label{eq:logsup}\\
 Z_J&=\sum_{2\le d\le K}\widetilde d_t(d)d^{-\sigma}
                  \sum_{2\le m\le J/d}c_t(m)m^{-\sigma}.
 \label{eq:overlap}
\end{align}
Only the first-head phases satisfy the required relation
\(z_dz_m=z_{dm}\); all other labels remain separate.
The shared mass in \(Q_0T\) and \(V_0\), through this retained head,
is \(D_K+Z_J\).
Lemma~\ref{lem:positive} licenses its removal from both labelled
sums before the triangle inequality, giving
\begin{equation}\label{eq:matching}
 |Q_0T-V_0|
 \le\bar B_0+(1+\bar A_0)D_K-2(D_K+Z_J)
 =\bar B_0-(1-\bar A_0)D_K-2Z_J.
\end{equation}
The right side bounds the remaining labelled mass and is nonnegative.
The constant label is responsible for \(D_K\); the overlap \(Z_J\)
excludes that constant.

\begin{proposition}[Matched centred-tail transfer]\label{prop:tail}
Assume \eqref{eq:decomposition}--\eqref{eq:matching} and
\(g=1-\bar A_0-\eta>0\). Then
\begin{equation}\label{eq:tailbound}
 \boxed{\Rea\frac VQ\le
       \frac{\bar B_0+\nu\eta+\eps-2Z_J}{1-\bar A_0-\eta}
              -(D_K-\Rea T).}
\end{equation}
The bounds must hold uniformly on the full physical rectangle where
this source will be consumed.
\end{proposition}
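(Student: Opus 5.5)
The plan is to isolate the logarithmic supplement \(T\) from the quotient and to bound only the residual \((V-QT)/Q\) by the triangle inequality. Everything hinges on one identity and one lower bound for \(|Q|\):
\[
 \frac VQ = T+\frac{V-QT}{Q},\qquad\text{hence}\qquad
 \Rea\frac VQ\le \Rea T+\frac{|V-QT|}{|Q|}.
\]
The target \eqref{eq:tailbound} has the form
\(\Rea T - D_K + N/g\) with \(N=\bar B_0+\nu\eta+\eps-2Z_J\) and
\(g=1-\bar A_0-\eta\). So it suffices to establish two estimates:
\[
 |Q|\ge g>0
 \qquad\text{and}\qquad
 |V-QT|\le N-gD_K .
\]
Indeed, granting both, we get \(|V-QT|/|Q|\le (N-gD_K)/g=N/g-D_K\). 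This step uses that \(N-gD_K\ge0\), which the bound on \(|V-QT|\) itself guarantees.

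For the denominator, I would use \(Q=Q_0+R\). Here \(Q_0=1+{}\)(nonconstant retained labels), whose amplitude is at most \(\bar A_0\), and \(|R|\le\eta\) by \eqref{eq:decomposition}. This gives \(|Q|\ge 1-\bar A_0-\eta=g>0\). In particular \(Q\neq0\), so the denominator is excluded before division, as the source construction requires.

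For the numerator, I would substitute \eqref{eq:decomposition} and expand:
\[
 V-QT=(V_0-Q_0T)+R(\nu-T)+E .
\]
The three terms are bounded separately.
\begin{enumerate}
\item The first term is controlled by the matching inequality \eqref{eq:matching}: \(|V_0-Q_0T|\le\bar B_0-(1-\bar A_0)D_K-2Z_J\).
\item For the second term, \(\nu\ge0\), and \(|T|\le D_K\) because Lemma~\ref{lem:positive} makes every \(\widetilde d_t(d)d^{-\sigma}\) nonnegative while \(|z_d|=1\). Hence \(|R(\nu-T)|\le\eta(\nu+D_K)\).
\item The third term satisfies \(|E|\le\eps\).
\end{enumerate}
Adding the three bounds gives
\[
 |V-QT|\le\bar B_0+\nu\eta+\eps-2Z_J-(1-\bar A_0-\eta)D_K=N-gD_K .
\]
This is exactly the required estimate, and the proof closes with the identity above.

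The algebra is routine. The point I expect to require care is the legitimacy of \eqref{eq:matching} and of \(|T|\le D_K\) at every point where the source is consumed. First, the phase relation \(z_dz_m=z_{dm}\) is available only for the first head, so \(Z_J\) may count only products landing at indices \(\le J\). Second, \(\bar A_0,\bar B_0,\eta,\eps,\nu\) must be uniform bounds over the whole physical rectangle, including both translated observations. I would therefore state the proposition pointwise, noting that each hypothesis is consumed only at a fixed source point. I would then remark that uniformity of the conclusion follows from uniformity of the inputs, since the right side of \eqref{eq:tailbound} is monotone in each allowance.
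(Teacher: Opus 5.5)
Your proposal is correct and follows the paper's proof essentially line for line. It uses the same exact identity \(QT-V=(Q_0T-V_0)+(T-\nu)R-E\), the same residual budget \(U_0=N-gD_K\) assembled from \eqref{eq:matching}, \(|T|\le D_K\), \(|R|\le\eta\) and \(|E|\le\eps\), and the same division by \(|Q|\ge g\) followed by the cancellation \(D_K+U_0/g=N/g\). The only difference is that you spell out why \(|Q|\ge g\) (via \(|Q_0|\ge1-\bar A_0\)) and why \(|T|\le D_K\) (via the nonnegativity in Lemma~\ref{lem:positive}), both of which the paper simply asserts.
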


\begin{proof}
Use the exact identity
\[
 QT-V=(Q_0T-V_0)+(T-\nu)R-E.
\]
Since \(|T|\le D_K\) and \(|Q|\ge g\), a nonnegative residual
budget is
\[
 U_0=\bar B_0-(1-\bar A_0)D_K-2Z_J+(D_K+\nu)\eta+\eps.
\]
Thus \(\Rea(V/Q)\le\Rea T+U_0/g\).
The cancellation
\[
 D_K+U_0/g=\frac{\bar B_0+\nu\eta+\eps-2Z_J}{g}
\]
gives \eqref{eq:tailbound}.
\end{proof}

Keeping the original complete physical error allowance in
\eqref{eq:physical}, define
\[
 F_J=P-\frac{\bar B_0+\nu\eta+\eps-2Z_{J,\mathrm{low}}}{2g}.
\]
Then
\[
 S_t(x,h)>F_J+\tfrac12(D_K-\Rea T)+\Delta_E .
\]
The same common-phase paired deficit argument can be applied at the
two translated observations, using coefficients bounded from below
at both points and separate denominators.
This gives a compatible complete source estimate rather than an
addition of incompatible gains.

\begin{remark}[Net improvement criterion]\label{rem:net}
At a fixed physical height, compare the complete quotient
\[
 \frac{\bar B_0+\nu\eta+\eps-2Z_{J,\mathrm{low}}}
      {1-\bar A_0-\eta}
 \quad\text{with}\quad
 \frac{\bar B-2Z_{\mathrm{full,low}}}{1-\bar A},
\]
with the physical errors and paired phase deficits fully accounted
for in each candidate. Removing a tail loses some overlap credit.
Smaller \(\eta,\eps\) alone therefore do not prove a larger source floor.
A complete old estimate can remain a fallback. Pointwise maxima
of valid complete candidates are taken before minima over the
source-class pairs.
\end{remark}

\section{Explicit finite Weyl tails and the numerical improvement}
\label{sec:exponent}

We now use oscillatory cancellation to improve a complete source, rather
than subtracting a zeta growth exponent from \(\Lambda\). The matched
transfer in Section~\ref{sec:tail} and the centred cone below are
alternative suppliers. The numerical gain uses the cone supplier,
which retains the allocated finite phase labels and explicitly pays
for the change in their credits.

\subsection{A centred cone preserving the retained phase credits}

At a physical source point, write the complete labelled model as
\[
 Q=1+\sum a_jz_j,\qquad V=\sum\omega_ja_jz_j,
 \qquad a_j,\omega_j\ge0,\quad |z_j|=1.
\]
The nonconstant amplitude and weighted moment have complete bounds
\(A^-\le\sum a_j\le A^+\), \(\sum\omega_ja_j\le B^+\).
Let \(C,D\) be the old moment-ratio bound and exceptional debit,
with
\begin{equation}\label{eq:oldcone}
 C(1+A^-)^2-(1+A^-)B^+-D>0.
\end{equation}
The scalar source has paid its physical perturbation using these
complete amplitude and moment bounds.

Remove only the positive first-head terms \(J<n\le M=690988\).
The retained head contains every allocated constant/head label
through 1024 and every allocated head/head label through 2048.
The reflected, product-tail and activation labels stay in the core.
Write
\[
 Q=Q_0+R,\qquad V=V_0+\nu R+E,
 \qquad |R|\le\eta,\quad |E|\le\eps,
\]
and let \(A_0,B_0\) be the actual retained amplitude and moment.

\begin{lemma}[Centred cone transfer]\label{lem:cone}
Assume \(0<C'\le C\) and \(\nu\ge2C'\). The complete cone at
the new parameter \(C'\)
is bounded below by
\begin{equation}\label{eq:newcone}
 C'(1+A_0+\eta)^2
  -(1+A_0+\eta)(B_0+\nu\eta+\eps)-D
  +\mathcal P_{\rm retained}(C'),
\end{equation}
where \(\mathcal P_{\rm retained}(C')\) is the sum of the licensed,
nonnegative retained phase surpluses. The old debit \(D\) remains valid
if \(C'\le C\).
\end{lemma}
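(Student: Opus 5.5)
The plan is to treat the ``complete cone'' at parameter \(C'\) as the quantity
\[
 \mathcal K(C')=C'\lvert Q\rvert^2-\Rea\bigl(\bar Q V\bigr)-D
\]
together with its phase-surplus credits. This expression is the natural one behind \eqref{eq:oldcone}: when \(\Rea(\bar QV)\le C'\lvert Q\rvert^2-D\), the scalar source receives its bound on \(\Rea(V/Q)\). The first step is to fix this interpretation. One then observes that \eqref{eq:oldcone} is the worst-case evaluation of \(\mathcal K(C)\), with \(\lvert Q\rvert\) replaced by \(1+A^-\), \(\Rea(\bar QV)\) replaced by \((1+A^-)B^+\), and the retained phase surpluses discarded. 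Under this reading, the monotonicity claim about \(D\) is immediate. The exceptional debit was licensed as an allowance for the pairs where the cone inequality at parameter \(C\) can fail. Each such labelled pair contributes a term whose sign deficit is nondecreasing in the cone parameter, so lowering \(C\) to \(C'\le C\) only shrinks the set of failing pairs and their deficits. I would verify this pair by pair, from the form of the two-label contribution \(C'a_ia_j\cos\theta-\tfrac12(\omega_i+\omega_j)a_ia_j\cos\theta\).

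The core step is to expand \(\mathcal K(C')\) using the decomposition \(Q=Q_0+R\) and \(V=V_0+\nu R+E\). This gives
\[
 \mathcal K(C')=\bigl[C'\lvert Q_0\rvert^2-\Rea(\bar Q_0V_0)-D\bigr]
   +\Rea\bigl(\bar R\,(2C'Q_0-V_0)\bigr)
   +(C'-\nu)\lvert R\rvert^2-\Rea(\bar Q_0\nu R)\,\cdot 0-\Rea(\bar QE),
\]
where I will regroup the terms carefully so that the \(\nu R\) cross term pairs with \(2C'\bar Q_0R\). The bracket is the retained cone. Since every retained label carries a nonnegative coefficient, it splits, exactly as in the original source assembly, into the diagonal bound \(C'(1+A_0)^2-(1+A_0)B_0-D\) plus the nonnegative licensed phase surpluses \(\mathcal P_{\rm retained}(C')\); the head labels through 1024 and 2048 are exactly those whose surplus credits were allocated. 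The hypothesis \(\nu\ge 2C'\) makes the combination \((2C'-\nu)\Rea(\bar Q_0R)\) controllable with the correct sign of slack. The remaining pieces are bounded crudely by \(\lvert R\rvert\le\eta\), \(\lvert E\rvert\le\eps\) and \(\lvert Q\rvert\le1+A_0+\eta\). This yields the penalty terms \(C'(2A_0\eta+\eta^2+2\eta)\) and \(-(1+A_0+\eta)(\nu\eta+\eps)\), which assemble precisely into \eqref{eq:newcone}. The bookkeeping check is that the final expression equals the lower bound obtained by ``replacing \(1+A_0\) by \(1+A_0+\eta\) and \(B_0\) by \(B_0+\nu\eta+\eps\)'' in the diagonal cone.

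I expect the main obstacle to be the middle step, namely ensuring that the phase surpluses credited to retained labels remain valid after the tail is removed and \(C\) is lowered to \(C'\). Each surplus is a pairwise quantity of the form \((C'-\tfrac{\omega_i+\omega_j}{2})a_ia_j(1+\cos\theta_{ij})\) or similar, and it depends on \(C'\). I would handle this by restating each credit as a function of \(C'\) and charging the change explicitly, which is the ``credit-loss'' accounting mentioned in the abstract. Only the credits that remain nonnegative at \(C'\) are kept in \(\mathcal P_{\rm retained}(C')\). Cross pairs between removed tail labels and retained labels are not credited; they are absorbed in the \(\eta\)-terms via the triangle inequality.
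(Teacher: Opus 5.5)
Your proposal is correct and is essentially the paper's proof. You write the full cone as the core cone plus
\(\Rea\bigl((2C'-\nu)R\overline{Q_0}-V_0\overline R\bigr)+(C'-\nu)\lvert R\rvert^2-\Rea\bigl(E\overline Q\bigr)\), and bound the core by its aligned-phase baseline plus nonnegative surpluses, with the exceptional charges covered by the old \(D\) because they can only shrink when \(C'\le C\). You then bound the difference crudely using \(\nu\ge2C'\), which reproduces \eqref{eq:newcone} exactly.

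Only cosmetic repairs are needed. The ``\(\cdot0\)'' term in your display should be the \(-\nu\Rea(\overline{Q_0}R)\) that you later combine with \(2C'\Rea(\overline{Q_0}R)\). \(D\) should enter once, through the core bound, not also in the definition of \(\mathcal K\). Finally, the credit-loss accounting you flag as the main obstacle is not needed for this lemma, where surpluses are simply evaluated at \(C'\); it belongs to the later comparison with the old credits.
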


\begin{proof}
For the core, expand \(C'|Q_0|^2-\Rea(V_0\overline{Q_0})\)
about the aligned phases. Its constant/head surplus is
\((\omega_j-2C')a_j(1-\Rea z_j)\); its unordered head/head surplus is
\((\omega_j+\omega_k-2C')a_ja_k(1-\Rea(z_j\overline{z_k}))\).
For a negative frequency coefficient \(-b\), charging \(2b\) times
its label mass leaves the nonnegative surplus \(b(1+\cos\theta)\)
times that mass. For a positive coefficient, the surplus is
\(b(1-\cos\theta)\). This treats both retained chiralities.
Decreasing \(C\) decreases every exceptional magnitude, so the old
debit covers the new exceptional charges with nonnegative spare margin.
Omitted nonnegative surpluses may be discarded.

The difference between the full cone and the core cone is
\[
 \Rea\bigl((2C'-\nu)R\overline{Q_0}-V_0\overline R\bigr)
 +(C'-\nu)|R|^2-\Rea\bigl(E(\overline{Q_0}+\overline R)\bigr).
\]
It is at least
\[
 -\bigl((\nu-2C')(1+A_0)+B_0\bigr)\eta
 -(\nu-C')\eta^2-\eps(1+A_0+\eta).
\]
Combining this with the core aligned-phase baseline gives
\eqref{eq:newcone} by direct expansion.
\end{proof}

Suppose the removed positive tail has mass between \(a_-\) and
\(a_+\), and moment at least \(b_-\). Put
\begin{align*}
 A_{0,-}&=\max(0,A^--a_+),& A_{0,+}&=A^+-a_-,\\
 B_{0,+}&=B^+-b_-,&
 S_*&=1+A_{0,-}+\eta,\\
 A_*&=A_{0,+}+\eta,& B_*&=B_{0,+}+\nu\eta+\eps.
\end{align*}
Define the reduction
\begin{equation}\label{eq:delta}
 \Delta=\frac{B^+}{1+A^-}+\frac D{(1+A^-)^2}
          -\frac{B_*}{S_*}-\frac D{S_*^2},\qquad C'=C-\Delta.
\end{equation}
The consumer requires
\[
 0\le A_{0,-}\le A_{0,+},\qquad B_{0,+}\ge0,\qquad
 \eta,\eps\ge0,
\]
\[
 \eta\le a_-,\quad B_*\le B^+,\quad
 0<C'\le C,\quad \nu\ge2C',\quad 2C'S_*>B_*.
\]
The last condition licenses replacing actual \(1+A_0+\eta\)
by \(S_*\) in the quadratic cone lower bound.
Equation~\eqref{eq:delta} preserves exactly the original additive
reserve:
\[
 C'-B_*/S_*-D/S_*^2
 =C-B^+/(1+A^-)-D/(1+A^-)^2>0.
\]
Since \(A_*\le A^+\) and \(B_*\le B^+\), the original complete
physical perturbation charge remains conservative.
With an empty removed tail, \(\Delta=0\); the formula supplies no
gain merely by reoptimising the old cone reserve.

\subsubsection{Charging the change in surplus credits}

The existing paired credits use distances of frequencies or frequency
sums from \([2C_{\min},2C_{\max}]\).
Changing one class bound by at most \(\Delta\) reduces each distance
by at most \(2\Delta\). The retained normalization is
\(2(1+A)^2\). For \(A_{\max}<1\), the total normalized
constant/head mass is at most
\(A_{\max}/[2(1+A_{\max})^2]\): choose either side of the
coupled coefficient minimum and use the increasing function
\(A/(1+A)^2\). The disjoint unordered head/head mass is at most
\(A_{\max}^2/2\).
Thus a valid loss allowance is
\begin{equation}\label{eq:creditloss}
 L_{jk}=\Delta\,
     \frac{A_{\max}+A_{\max}^2/2}{(1+A_{\max})^2}.
\end{equation}
All allocated labels survive the cutoff. The Euler credit is unchanged.

Only class 0 is refined. Its paired source cases \((0,0)\) and
\((0,1)\) receive respectively
\(\Delta/2-L_{00}\) and \(\Delta/4-L_{01}\).
The other nine cases keep their complete old bounds.
The consumer takes the minimum over all eleven cases, rounds down
on the \(10^{-12}\) lattice, and takes the maximum with the complete
old source floor. This is the step which turns local cancellation
into a licensed global source improvement.
The matched logarithmic alternative remains available on earlier
cells; no overlap credit from that alternative is added here.

\subsection{Finite Fourier grouping and weighted prefixes}

For \(v=\log n\), expand the finite convolution
\eqref{eq:coefficients} as
\[
 c_t(n)=\e^{tv^2/4}
       \sum_{\substack{d\mid30\\d\mid n}}
          \kappa_d(t)\e^{-tv\log d/2},\qquad
 \kappa_d(t)=\mu(d)\exp\!\left\{
       \frac t4\left((\log d)^2+\sum_{p\mid d}(\log p)^2\right)\right\}.
\]
The finite identity
\(\mathbf1_{d\mid n}=d^{-1}\sum_{a=0}^{d-1}\e^{2\pi ian/d}\)
therefore gives the exact grouping
\begin{equation}\label{eq:fourier}
 c_t(n)=\e^{tv^2/4}\sum_{j=0}^{29}F_j(v;t)\e^{2\pi ijn/30},
 \quad
 F_j(v;t)=
 \sum_{\substack{d\mid30\\30/d\mid j}}
        \frac{\kappa_d(t)}d\,\e^{-tv\log d/2}.
\end{equation}
Grouping precedes every absolute value.
For physical phase frequency \(\Omega=x/2+\pi t/8\), the thirty
twisted phases are
\[
 f_j(u)=\frac{\Omega}{2\pi}\log u+\frac{ju}{30}.
\]
All derivatives of order at least two are independent of \(j\).
Thus one uniform derivative-test envelope can cover all twists.

\begin{proposition}[Uniform weighted-prefix transfer]\label{prop:prefix}
Suppose \(\mathcal B(u)\) bounds every twisted prefix
\(\bigl|\sum_{J<n\le u}\e^{2\pi if_j(n)}\bigr|\), uniformly
on the full source rectangle and for every \(0\le j<30\).
For continuously differentiable weights \(W_j\),
\begin{equation}\label{eq:prefix}
 \left|\sum_{J<n\le M}W_j(n)\e^{2\pi if_j(n)}\right|
 \le |W_j(M)|\mathcal B(M)
       +\int_J^M\mathcal B(u)|W_j'(u)|\dd u .
\end{equation}
Apply this to
\[
 W_j(u)=\e^{t(\log u)^2/4-\sigma\log u}F_j(\log u;t),
 \qquad (\log u-\nu)W_j(u),
 \quad \sigma=(1+h)/2+tL/4.
\]
The sum of the thirty resulting bounds supplies
\(\eta\) and \(\eps\) in \eqref{eq:decomposition} for the removed
first-head tail.
\end{proposition}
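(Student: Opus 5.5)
The inequality \eqref{eq:prefix} is a partial-summation (Abel) estimate, and I will prove it that way. The remaining claim, that the thirty bounds supply \(\eta\) and \(\eps\), then follows from the exact grouping \eqref{eq:fourier}.

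\emph{Abel step.} Fix a twist \(j\) and put \(P_j(u)=\sum_{J<n\le u}\e^{2\pi if_j(n)}\). This is a step function with \(P_j(u)=0\) for \(u\le J\). Stieltjes integration by parts gives
\[
 \sum_{J<n\le M}W_j(n)\e^{2\pi if_j(n)}
 =\int_{J}^{M}W_j(u)\dd P_j(u)
 =W_j(M)P_j(M)-\int_J^M P_j(u)W_j'(u)\dd u .
\]
The lower boundary term vanishes because \(P_j(J)=0\). This step needs only that \(W_j\) is \(C^1\) on \([J,M]\), which holds since \(\log u\) is smooth there and \(F_j\) is a finite sum of exponentials in \(\log u\). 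Inserting \(|P_j(u)|\le\mathcal B(u)\) pointwise, which the hypothesis guarantees uniformly in \(j\) and on the rectangle, gives \eqref{eq:prefix}.

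\emph{Why one envelope suffices.} The twists \(f_j\) differ from \(f_0\) only by the linear term \(ju/30\). Hence \(f_j^{(k)}=f_0^{(k)}\) for \(k\ge2\), and every derivative-test (van der Corput/Weyl) bound depending only on second or higher derivatives is automatically uniform in \(j\). If the envelope is built from the first-derivative test, the linear shift \(j/30\) has to be handled separately: I would take distance to the nearest integer uniformly over the thirty residues.

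\emph{Assembly.} By \eqref{eq:fourier}, the removed first-head tail of \(Q\) is
\[
 R=\sum_{J<n\le M}c_t(n)n^{-\sigma}\e^{-i\Omega\log n}
 =\sum_{j=0}^{29}\sum_{J<n\le M}W_j(n)\e^{2\pi i\tilde f_j(n)} .
\]
The physical phase \(n^{-i\Omega}\) has the sign convention opposite to \(f_j\). Conjugation handles this: it preserves absolute values and maps the twist \(j\) to \(-j\equiv30-j\), so the family \(\{f_j\}\) is closed under it. The triangle inequality over \(j\) then gives \(|R|\le\eta\) with \(\eta=\sum_j(\text{RHS of }\eqref{eq:prefix})\). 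Grouping is performed before absolute values, so the cancellation is kept.

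For \(V\), the labelled weights on the first head are \(\omega_n=\log n\) up to the normalisation of \(V\) in \eqref{eq:labelled}. Therefore \(V_{\rm tail}-\nu R=\sum(\log n-\nu)c_t(n)n^{-\sigma}\dots\). Applying \eqref{eq:prefix} with the weight \((\log u-\nu)W_j\) bounds \(E\) by \(\eps\).

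\emph{Main obstacle.} The main obstacle is uniformity over the full source rectangle. Both \(\sigma\) and \(t\) vary there, and \(|W_j'|\) must be bounded by directed enclosures. I would bound \(|W_j'(u)|\) by \(u^{-1}\) times a derivative of the exponent in \(\log u\), taken at the extreme \(t,h,x\). I would check monotonicity in \(u\) so that the integral can be evaluated at the endpoints, and then replace \(F_j\) by a bound on \(|F_j|+|\partial_vF_j|\) over \(t\in I_i\).
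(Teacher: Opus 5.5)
Your proposal is correct and follows the paper's own proof: partial summation against the prefix step function, the pointwise envelope \(|P_j(u)|\le\mathcal B(u)\), the centred weight \((\log u-\nu)W_j\) for \(\eps\), and the exact grouping \eqref{eq:fourier} followed by the triangle inequality over the thirty twists. Your conjugation step is not needed, since \(s_U^0=\sigma-i\Omega\) gives \(n^{-s_U^0}=n^{-\sigma}\e^{i\Omega\log n}\), whose phase already matches \(f_j\); it does no harm, because the prefix bound is uniform over all residues \(j\).
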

\begin{proof}
Discrete partial summation gives the endpoint term minus the
integral against the prefix step function. Taking absolute values
proves \eqref{eq:prefix}; the centred weight proves the moment
assertion. The coefficient identity \eqref{eq:fourier} is exact.
\end{proof}

\subsection{An explicit finite Weyl envelope}

For the first frequency class \(s\in[0,1/20]\), the complete endpoint
allowance is covered by
\[
 L=2\log M+s',\qquad -10^{-9}\le s'\le1/10+10^{-9},
 \qquad \alpha=\frac{\Omega}{2\pi}=\e^L+t/16.
\]
Here \(M=N_0=690988\) is the fixed first-head limit; the letter
\(M\) in the contraction-rate formulas denotes an own-pair cap.
The tail estimate concerns only this fixed positive first head.
On an integer parent block \((a,b]\), let
\(\alpha_-\le\alpha\le\alpha_+\), and set
\(l=2\alpha_-/b^3\), \(u=2\alpha_+/a^3\).
The phase difference \(f_j(v+r)-f_j(v)\) has positive, decreasing
second derivative between \(lr\) and \(ur\).
Yang's explicit second-derivative test~\cite[Lemma 2.1]{yang}
therefore bounds each nonempty correlation by
\[
 c_2\frac u{\sqrt l}N\sqrt r+uNr+
     \frac{c_2}{\sqrt l\sqrt r},\qquad
 c_2=113/50>4/\sqrt\pi,
\]
where using the parent prefix length \(N\), rather than \(N-r\),
only enlarges the bound.

For every integer differencing parameter \(q\ge1\), zero padding
and Cauchy--Schwarz give
\begin{equation}\label{eq:finiteweyl}
 \left|\sum_{n=1}^N z_n\right|^2
 \le(N-1+q)\left\{\frac Nq+
       \frac2q\sum_{r=1}^{q-1}(1-r/q)|\mathcal C_r|\right\},
\end{equation}
where \(z_n=\e^{2\pi if_j(a+n)}\) on \(1\le n\le N\),
zero elsewhere, and
\(\mathcal C_r=\sum_{n=1}^{(N-r)_+}z_{n+r}\overline{z_n}\).
Indeed, summing the \(q\)-term moving sums over their \(N+q-1\)
possible starting positions counts each \(z_n\) exactly \(q\) times.
Their squared norms sum to
\(qN+2\sum_{r=1}^{q-1}(q-r)\Rea\mathcal C_r\).
Cauchy--Schwarz and division by \(q^2\) prove
\eqref{eq:finiteweyl}, including \(q>N\).
Using
\[
 \sum_{r=1}^{q-1}(1-r/q)\sqrt r\le\tfrac4{15}q^{3/2},\quad
 \sum_{r=1}^{q-1}(1-r/q)r\le\tfrac16q^2,\quad
 \sum_{r=1}^{q-1}\frac{1-r/q}{\sqrt r}\le\tfrac43\sqrt q,
\]
one obtains the explicit prefix bound
\begin{equation}\label{eq:Weyl}
 \mathcal B_q(N)^2=(N-1+q)
 \left(\frac Nq+\frac{8c_2u}{15\sqrt l}N\sqrt q
                +\frac u3Nq+\frac{8c_2}{3\sqrt l\sqrt q}\right).
\end{equation}
The first weighted-sum inequality follows by integrating the concave
function \((1-r/q)\sqrt r\); the second follows from the exact sum
\(\sum_{r=1}^{q-1}(1-r/q)r=(q^2-1)/6\);
the third follows by integrating a decreasing function.
All lower-order terms are retained. No condition \(q\le N\) is
needed after zero padding. The trivial bound \(N\) may be used as well.

\subsection{Uniform panels and positive removed-tail masses}

Each block is treated by Proposition~\ref{prop:prefix} after resetting
the prefix origin to its left integer endpoint. A fixed-parameter
envelope in \eqref{eq:Weyl} increases with prefix length. For each full
integration panel, its value at the right endpoint bounds the prefix
step function throughout that panel. Directed boxes enclose all weights
and their derivatives over the complete time cell and frequency subband.
The thirty twist bounds are added, block bounds are added, and subband
maxima are taken. No physical oscillatory sum is sampled.

The positive removed-tail masses are bounded independently by the
finite signed divisor convolution
\[
 \sum_{J<n\le M}c_t(n)n^{-\sigma}
 =\sum_{d\mid30}\mu(d)\e^{t\sum_{p\mid d}\log^2p/4-\sigma\log d}
       \sum_{J/d<m\le M/d}\e^{t\log^2m/4-\sigma\log m}.
\]
The log moment inserts \(\log m+\log d\).
Discrete midpoint bins enclose the raw positive summands with complete
second-derivative remainder bounds; all eight signed terms are combined
before extracting endpoints. For \(f(m)=\e^{t\log^2m/4-\sigma\log m}\)
and \(r=t\log m/2-\sigma\),
\begin{align*}
 f''(m)&=\frac{f(m)}{m^2}(r^2-r+t/2),\\
 ((\log m+\log d)f(m))''
 &=\frac{f(m)}{m^2}
     ((\log m+\log d)(r^2-r+t/2)+2r-1).
\end{align*}
Both convexity signs and upper caps are verified on each full integer
bin \([a,b]\). If \(N=b-a+1\) and \(c=(a+b)/2\),
\(\sum(m-c)=0\), \(\sum(m-c)^2=N(N^2-1)/12\).
For either raw summand \(g\), a verified bound \(0\le g''\le K_g\)
therefore gives
\[
 Ng(c)\le\sum_{m=a}^b g(m)
      \le Ng(c)+K_gN(N^2-1)/24.
\]
This is a complete Taylor remainder enclosure.
Positivity and time monotonicity of \(c_t(n)\), proved in
Lemma~\ref{lem:positive}, license the separated time--exponent lower and upper models.
This step bounds removed mass without replacing it by raw absolute
divisor masses.

\subsection{Complete source assembly}
For each cell and shift let \(b_{jk}\) be the original complete paired
floor for each of the eleven cases. Refine only class 0 using
\eqref{eq:delta}; all other class bounds remain unchanged. Define
\[
 b^{\mathrm W}_{00}=b_{00}+\Delta/2-L_{00},\qquad
 b^{\mathrm W}_{01}=b_{01}+\Delta/4-L_{01},
\]
and set \(b^{\mathrm W}_{jk}=b_{jk}\) for the other nine cases.
The accepted complete source is
\begin{equation}\label{eq:refinedfloor}
 \ell^{\mathrm W}_{p,i,r}=
 \max\left\{\ell^{\mathrm{old}}_{p,i,r},\,
      10^{-12}\floor{10^{12}\min_{(j,k)}b^{\mathrm W}_{jk}}\right\}.
\end{equation}
Every selected label lies below the retained cutoff. The Euler credits,
reflection, product-tail terms, activation allowances and physical errors
are retained. No logarithmic overlap credit from Proposition~\ref{prop:tail}
is added to this cone supplier.

\begin{proposition}[Complete refined providers]\label{prop:refinedsources}
Under the complete model-to-function correspondence in (A1), the
uniform tail enclosures and rational checks described here give
\[
 \bar S_{t,h_{p,i},r}(x)>\ell^{\mathrm W}_{p,i,r}
       \quad(t\in I_i,\ x\ge X)
\]
for every provider, cell and declared fixed shift.
The schedules, observation heights, kernel domains and original
physical error allowances are unchanged.
\end{proposition}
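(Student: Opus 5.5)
The plan is to reduce the claim to a pointwise statement about each of the eleven class pairs. The refined floor \eqref{eq:refinedfloor} is a maximum of the old floor and a rounded-down minimum over cases. Since (A1) already gives \(\bar S_{t,h_{p,i},r}(x)>\ell^{\mathrm{old}}_{p,i,r}\), it suffices to prove the case bounds. For every \(t\in I_i\), every \(x\ge X\) and each case \((j,k)\) active at the two translated observations \(x\pm rh\), we want \(\bar S_{t,h,r}(x)>b^{\mathrm W}_{jk}\). Rounding down on the \(10^{-12}\) lattice and taking the minimum only weaken this. For the nine cases other than \((0,0)\) and \((0,1)\), \(b^{\mathrm W}_{jk}=b_{jk}\), and the statement is exactly the original complete paired floor licensed by (A1). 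So all the work is in class 0.

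For a translated observation in class 0, I would revisit the original source assembly at that point. (A1) supplies the labelled model \eqref{eq:labelled}, the physical transfer \eqref{eq:physical}, the bounds \(A^\pm,B^+\), the moment ratio \(C\), the debit \(D\) satisfying \eqref{eq:oldcone}, and the allocated paired phase credits. In the original assembly, the contribution of \(-\tfrac12\Rea(V/Q)\) is controlled through the cone \(C|Q|^2-\Rea(V\bar Q)\ge\) (reserve) \(+\) credits. The resulting floor depends on \(C\) through the term \(-C/2\) and through the credits.

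Next I split off the positive first-head tail \(J<n\le N_0\). Proposition~\ref{prop:prefix} with the Fourier grouping \eqref{eq:fourier} and the envelope \eqref{eq:Weyl} gives \(\eta,\eps\) uniformly over the cell and subband. The midpoint-bin enclosures, licensed by Lemma~\ref{lem:positive}, give \(a_\pm,b_-\). The rational consumer checks the listed side conditions. Then Lemma~\ref{lem:cone} with \(C'=C-\Delta\) from \eqref{eq:delta} shows the new cone keeps exactly the original additive reserve, and the old debit remains valid. Since \(A_*\le A^+\) and \(B_*\le B^+\), the physical perturbation charge already paid in \(\ell_{\mathrm{base}}\) stays conservative.

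Replacing \(C\) by \(C'\) raises the scalar floor of a single observation by \(\Delta/2\), because the moment ratio enters \eqref{eq:physical} via \(C/2\). Averaging over the two translations gives \(\Delta/2\) when both lie in class 0, and \(\Delta/4\) in the mixed case \((0,1)\). From this I subtract the credit-loss allowance \eqref{eq:creditloss}: each distance from \([2C_{\min},2C_{\max}]\) shrinks by at most \(2\Delta\), and the normalised label masses are bounded as stated. The Euler credit \(\Delta_E\), the reflected and product-tail labels, and the activation allowances are untouched, since every allocated label lies below the retained cutoff. This yields \(b_{00}+\Delta/2-L_{00}\) and \(b_{01}+\Delta/4-L_{01}\).

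The main obstacle is uniformity and bookkeeping rather than any single inequality. The tail enclosures must hold on the full rectangle of \(t\in I_i\), \(x\ge X\) in class 0 and both translated points, including the endpoint allowance for \(s'\). I would handle this by citing the directed boxes over complete time cells and subbands, with panel monotonicity of \(\mathcal B_q\). I would also check carefully that the same \(C'\), rather than pointwise different values, is used consistently wherever the credits were computed from class bounds, so that \(L_{jk}\) really covers every changed credit.
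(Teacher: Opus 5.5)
Your proposal is correct and follows essentially the same route as the paper's proof. You reduce via the maximum and downward rounding to the eleven case bounds, keep the nine unchanged original bounds, and in class~0 apply Lemma~\ref{lem:cone} with \(C'=C-\Delta\) (reserve preserved by \eqref{eq:delta}, \(A_*\le A^+\), \(B_*\le B^+\)) to gain \(\Delta/2\) per observation, hence \(\Delta/2\) in case \((0,0)\) and \(\Delta/4\) in case \((0,1)\), less the allowance \eqref{eq:creditloss}, exactly as the paper does.
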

\begin{proof}
Lemma~\ref{lem:cone}, the positive cone reserve in \eqref{eq:delta},
and the complete bounds \(A_*\le A^+\), \(B_*\le B^+\) license the
individual source increase \(\Delta/2\) with the original physical
perturbation charge. At a pair of translated observations this gives
\(\Delta/2\) in case \((0,0)\) and \(\Delta/4\) in case \((0,1)\).
The loss allowance \eqref{eq:creditloss} pays for both retained
surplus chiralities. The other nine case bounds are complete original
bounds. Their full minimum therefore applies at every physical centre,
including transitions between adjacent classes. Downward rounding
preserves the lower bound. The maximum with the original complete
floor is legitimate because both candidates hold on the entire domain.
Outside the refined cells the original complete estimate applies,
including its unbounded class and all future activations.
\end{proof}
\subsection{Directed enclosures and the numerical gain}

The frozen cutoffs are selected from
\(131072,196608,262144,327680\).
The centre is \(\nu=10^{-12}\lfloor10^{12}\log J\rfloor\).
Each parent block has 64 integration panels, and the first frequency
class is split into two closed subbands. The block ratios are chosen
from \(11/10,5/4,3/2,2\); every chosen ratio is checked as a complete
integer partition. Floating computations only choose these hints
and an integer differencing parameter. They do not determine accepted
enclosure endpoints or source floors.

The mass bins use division 512. FLINT/Arb regenerates the logarithms,
exponentials, Fourier weights and mass bounds independently at
192 and 256 bits. Acceptance uses the weaker lower or upper bound
from the two runs and checks interval overlap.
This is two precision runs of one directed library, not two independent
transcendental libraries.
All phase-envelope square roots and all final source substitutions
are checked by a separate rational consumer.

\begin{center}
\begin{tabular}{lrrr}
\toprule
Cell & \(J\) & \(\eta\) & \(\eps\)\\
\midrule
400 & 262144 & 0.003131230 & 0.001542391 \\
480 & 196608 & 0.004442683 & 0.002702791 \\
560 & 196608 & 0.004214458 & 0.002607289 \\
594 & 131072 & 0.005300291 & 0.004086663 \\
\bottomrule
\end{tabular}
\end{center}

The logarithmic centre is rounded downward and independently
checked with a rational atanh enclosure for \(\log J\).
The complete source consumer accepts \(552\) provider--cell
refinements on 276 time cells and validates
\(6528\) rational prefix envelopes per precision run.
Every refined source retains its own prescribed height and the entire
57-kernel catalogue. Old complete floors remain fallbacks everywhere.
The scalar comparison is regenerated from time zero with
32 steps per source cell, exact exponential upper bounds, upward
dyadic restarts and the full terminal strip cost.

An independent rational auditor, importing no producer or selector,
accepts every one of the \(19023\) steps and all time joins,
kernel ranges, own-pair caps and source ownership decisions.
It also rejects nine deliberately damaged histories.
The controlled result is
\begin{center}
\begin{tabular}{lr}
\toprule
Complete classical-strip calculation & Directed ceiling\\
\midrule
Original complete two-source comparison & 0.148633549774090\\
Explicit finite Weyl refinement & 0.148611789321711\\
Exact decrease & 0.000021760452379\\
\bottomrule
\end{tabular}
\end{center}
The associated replay retains the same starting strip, kernels,
own-pair caps and physical allowances. Its change is therefore a
numerical consequence of the finite tail method, with the modified
surplus credits fully charged. It is not a gain from changing the
initial zero strip or from releasing an old rounding reserve.

\subsection{What a stronger exponent pair would change}
For the classical theory of exponent pairs and their application to exponential sums, see Graham and Kolesnik~\cite{grahamkolesnik}.

In the fixed-ratio regime \(J/M=\theta\in(0,1)\),
\(\Omega\asymp M^2\), \(L=2\log M+\delta\) with bounded
\(\delta\ge0\), and \(t,h\) in fixed compact sets with \(t\ge0\),
a uniform bound of exponent-pair shape
\[
 \mathcal B\ll(\Omega/M)^\kappa M^\lambda
            =O(M^{\kappa+\lambda})
\]
gives, by Proposition~\ref{prop:prefix},
\begin{equation}\label{eq:scaling}
 \eta,\eps\ll
 M^{\kappa+\lambda-(1+h)/2}
            \e^{-t(\log M)^2/4},
\end{equation}
up to any stipulated exponent loss.
To see the scale, write \(\log u=\log M+w\),
\(\log\theta\le w\le0\).
The heat exponent becomes
\[
 -\frac t4(\log M)^2+\frac t4w^2
       -\frac{t\delta}4(\log M+w).
\]
The remaining weight and its total variation have the indicated
power of \(M\), uniformly on this fixed logarithmic interval.
Choose \(\nu=\log J+O_\theta(1)\); the centred logarithm is bounded.

For this regime the relevant exponent-pair objective is
\(\kappa+\lambda\).
The Weyl pair \((1/6,2/3)\) has sum \(5/6\).
A pair with sum \(5/6-\delta_{\mathrm{pair}}\) improves the leading
power by \(M^{-\delta_{\mathrm{pair}}}\).
If its leading constant is \(C_{\mathrm{new}}\), a necessary
leading-term break-even comparison is
\[
 C_{\mathrm{new}}/C_{\mathrm{Weyl}}<M^{\delta_{\mathrm{pair}}}.
\]
At the actual finite scale, lower-order terms, uniformity, denominator
reserve, lost matching credit and physical allowances must also pass.
An asymptotic pair with an unspecified implied constant does not
certify a new finite source.

The ceiling in \eqref{eq:main} already uses the explicit finite Weyl
envelope \eqref{eq:Weyl} and the centred cone supplier. The complete
matched logarithmic estimates are retained on the earlier cells where
they were already licensed. A stronger explicit exponent pair may
improve the tail further, but its finite constants and lower-order
terms must satisfy the same cone, surplus-loss and complete-case checks.
The matched transfer of Section~\ref{sec:tail} remains a separate
complete alternative; no incompatible supplier gains are summed.

\section{Reproducibility, provenance and remaining work}
\label{sec:repro}

The ancillary certificate package contains the original and refined complete
source providers, all four histories of Table~\ref{tab:controlled},
the directed tail enclosures at both precisions, exact reassembled
parent budgets and eleven-case tables, frozen cutoff hints, a native
uniform-tail producer, separate rational source and scalar consumers,
the full 57-kernel checker and verification receipts.
The manifest identifies bytes; the numerical consumers establish the
finite inequalities. Original native source evidence and analytic
foundations retain their attribution in the cited Raymond release.

The original scalar, physical and kernel inputs retain Raymond's
provenance. The collective method retains its analytic provenance
\cite{planat}. The matched and centred cone transfers are proved in
Sections~\ref{sec:tail}--\ref{sec:exponent}; the new finite Weyl supplier
uses the explicit second-derivative estimate of \cite{yang} with all
finite terms retained. The source reconstruction reassembles the
original interval evidence, whereas the new tail weights and positive
mass bounds were freshly regenerated at 192 and 256 bits.

The standard-library verifier reassembles the complete refined source,
checks its rational phase envelopes and cone budgets, verifies all
four scalar histories and the full kernel catalogue, and exercises
finite algebra fixtures and nine damaged-history rejection controls.
Its replay mode regenerates all four scalar histories and compares
their compressed bytes. Native tail regeneration is a separate
documented operation requiring a compatible FLINT/Arb library and
NumPy. Two precision runs use one directed library; agreement is not
claimed as independence of transcendental implementations.

The main outstanding analytic task remains full validation of the
model-to-function, localisation and global-contact interfaces
(A1)--(A3), including their seams and enlarged-height domains.
The numerical tail refinement is now a complete finite gain under
those premises. Further numerical gains require complete uniform
suppliers and another directed replay; validation of the analytic
interfaces has a different role, completing the interpretation of
the ceiling as a true-function theorem.

\appendix
\section{Supplementary comparisons under the \texorpdfstring{\(7/8\)}{7/8} hypothesis}
\label{app:qrh}

For \(1/2<\alpha<1\), write \(\HH_\alpha\) for the assertion that
every nontrivial zeta zero has real part at most \(\alpha\).
Zeros on the boundary are permitted. Functional-equation symmetry
then gives \(1-\alpha\le\Rea s\le\alpha\).
Since \(H_0(z)=\xi((1+iz)/2)/8\),
\begin{equation}\label{eq:stripalpha}
 \HH_\alpha\quad\Longrightarrow\quad
            |\Ima z|\le2\alpha-1
              \quad\text{for every zero of }H_0.
\end{equation}
In particular, \(\HH_{7/8}\) supplies squared half-height \(9/16\).

Raymond's new main comparison \cite{raymondretuned} uses the strict
seed \(562501/10^6=9/16+10^{-6}\), the retuned physical source
and fixed kernels. Under its physical and localisation interfaces,
its principal implication is
\[
 \HH_{7/8}\ \Longrightarrow\
 \Lambda\le\frac{116397726408291}{10^{15}}
                =0.116397726408291<0.116398.
\]
The alternate exact history gives
\(14549715801039/125000000000000
 =0.116397726408312\).
Both have 14,899 steps; the stored-source audit with history
regeneration reproduced their compressed files byte-for-byte.

The external \(7/8\) statement is associated with OpenAI's
30 September 2026 quasi-Riemann-hypothesis release
\cite{openaiqrh}. The official formalization scope note
\cite{openaiscope} states that the \(7/8\) nonvanishing result is
formalized for the Riemann zeta function, and also describes broader
Dirichlet and Hecke statements. The latter are not needed here.
The supplied comparator configuration names the theorem
\texttt{OAI.riemannZeta\_ne\_zero\_of\_seven\_eighths\_lt\_re}
in \texttt{OAI.NumberTheory.DirichletL.Nonvanishing}.
An official scope declaration is distinct from executing the pinned
Lean build; no such build was run for this paper.

Accordingly, we retain \(\HH_{7/8}\) as an explicit external
mathematical input to this supplementary implication.
Validation of that input would remove this particular hypothesis
from the implication, while the physical-source and global-contact
assembly would still need its own validation.
The classical-strip theorem \ref{thm:main} has no dependence on
this external zero-free result.
The difference between \(0.148612\) and \(0.116398\) therefore includes
a stronger initial strip; it is not a comparison of arithmetic
accuracy.

\subsection{Hakimoglu-Brown's derivative comparison}
Cetin Hakimoglu-Brown \cite{hakimoglu} obtains the conditional ceiling
\(\Lambda\le13963/100000=0.13963\) under the same hypothesis
\(\HH_{7/8}\). His construction uses the published Polymath boundary
calculation to retain a finite-region strip of half-width \(1/5\),
rather than importing Raymond's supplied localisation construction.
At observation height \(p=61/20\), bounds for both
\(S(x,p,t)=-\Ima(H_t'/H_t)(x+ip)\) and its vertical derivative
support a strengthened pairwise inequality of the form
\[
 K_y(d,v)\ge y\left\{\frac{\lambda}{p}K_p(d,v)
                         -\mu\partial_pK_p(d,v)\right\}.
\]
A 24-step comparison then reaches squared height \(1/25\) at
\(t=0.11963\); the final classical strip contraction costs \(1/50\).
His finite scalar and polynomial certificates are specified in the
paper. Their specification is distinct from independent reproduction;
we have not independently replayed his full certificate or validated
all of its analytic estimates.

Raymond's reported ceiling \(0.116397726408291\) is numerically smaller
under the same initial half-plane hypothesis, but uses retuned
translated probes, matched finite logarithmic coefficients and
separate supplied analytic and numerical inputs. Hakimoglu-Brown's
derivative comparison therefore has a distinct mechanism and
dependency structure. Neither conditional ceiling is ranked against
Theorem~\ref{thm:main} by decimal size alone: the latter starts from
the classical strip and retains its explicit premises (A1)--(A3).

\subsection{A translated derivative pilot and its boundary}
The derivative mechanism also admits a translated pilot. Put
\[
 \begin{aligned} T_h(d,v)&=\tfrac12\{K_h(d-rh,v)+K_h(d+rh,v)\},\\
 D_h(d,v)&=\left.\tfrac12\partial_p
       \{K_p(d-rh,v)+K_p(d+rh,v)\}\right|_{p=h}.\end{aligned}
\]
The physical displacement \(rh\) is held fixed in this derivative.
For \(r=19/40\), \(V=211/1000\),
\(\lambda=10019/10000\) and \(\mu=1/500\), exact rational
Bernstein subdivision certifies
\[
 \frac{K_y(d,v)}{y}\ge\frac{\lambda}{h}T_h(d,v)-\mu D_h(d,v),
 \qquad 0<y^2/h^2\le V,\quad0\le v\le y,
\]
away from the singular distinguished pair. The sufficient lower
envelope is \(2/(d^2+Vh^2+3v^2)\). After multiplication by positive
pair denominators, scaling \(h=1\), and compactifying
\((d/h)^2=U/(1-U)\), \((v/h)^2=VW\), its numerator gives a
polynomial of degree \((8,8)\). Ten rational Bernstein leaves prove
positivity. The ancillary pilot code specifies this construction;
it has not been audited by an independent implementation.

For the distinguished pair, the existing cap
\(hT_h(0,y)\le M=333069367/200000000\) is retained, and a
separate rational Bernstein check gives \(h^2D_h(0,y)\ge-11/10\).
If simultaneous physical bounds \(\overline S\ge\ell\) and
\(\overline{\partial_pS}\le G\) were available, the resulting rate
would be
\[
 c=\frac4h\left\{\lambda(\ell-M/h)
                      -\mu\left(hG+\frac{11}{10h}\right)\right\}.
\]
On the initial classical cell, using the unchanged source floor and
\(h=109/50\), this exceeds the existing rate precisely when
\[
 G<\frac{5204523417656017}{2376200000000000}
       =2.190271617564185\ldots.
\]
The required uniform physical derivative bound has not been
certified. Existing pointwise source floors cannot be differentiated
to obtain it. Second-derivative and remainder estimates, followed by
a full directed descent, are needed before this pilot can supply a
new ceiling. It contributes no numerical gain to Theorem~\ref{thm:main}.

\section{Exact fixed-kernel catalogue}\label{app:kernels}

The original thirteen triples and their complete own-pair caps
are listed below. All displayed finite decimals are exact; rational
fractions are retained where needed.
The first column is the zero-based kernel index used in the histories.
\begin{center}\small
\begin{tabular}{@{}rrrrr@{}}\toprule
Index & \(r\) & \(\rho\) & \(\lambda\) & \(M\)\\\midrule
0 & \(0.5\) & \(0.45\) & \(2\) & \(1.618033989\)\\
1 & \(0.525\) & \(0.45\) & \(1.99\) & \(1.575653048\)\\
2 & \(0.5\) & \(0.46\) & \(1.97\) & \(1.618033989\)\\
3 & \(0.45\) & \(0.475\) & \(1.9\) & \(1.718428456\)\\
4 & \(0.575\) & \(0.35\) & \(1.99\) & \(1.503067131\)\\
5 & \(0.6\) & \(0.2\) & \(1.99\) & \(\frac{25}{17}\)\\
6 & \(0.5\) & \(0.45\) & \(1.99\) & \(1.618033989\)\\
7 & \(0.6\) & \(0.35\) & \(1.97\) & \(\frac{25}{17}\)\\
8 & \(0.475\) & \(0.46\) & \(2\) & \(1.665346835\)\\
9 & \(0.51\) & \(0.45\) & \(2\) & \(1.600531138\)\\
10 & \(0.49\) & \(0.455\) & \(2\) & \(1.636324261\)\\
11 & \(0.535\) & \(0.35\) & \(2\) & \(1.559923928\)\\
12 & \(0.565\) & \(0.2\) & \(2\) & \(1.516438236\)\\
\bottomrule\end{tabular}
\end{center}

The additional 44 triples have shifts \(r=23/40\) and \(r=3/5\).
For every row in the following table, the corresponding
\(\lambda\) is the exact displayed terminating decimal.
Their whole-range caps are
\[
 M_{23/40}=\frac{1503067131}{10^9},\qquad M_{3/5}=\frac{25}{17}.
\]
They are Raymond's constants \cite{raymondretuned}, independently
checked here by Section~\ref{subsec:bernstein}.
\begin{center}\small
\begin{tabular}{@{}rrr@{\qquad}rrr@{}}\toprule
\(\rho\) & \(\lambda_{23/40}\) & \(\lambda_{3/5}\) & \(\rho\) & \(\lambda_{23/40}\) & \(\lambda_{3/5}\)\\\midrule
\(0.05\) & \(1.99998\) & \(1.99819\) & \(0.35\) & \(1.99603\) & \(1.98841\)\\
\(0.1\) & \(1.99997\) & \(1.99784\) & \(0.36\) & \(1.99551\) & \(1.98751\)\\
\(0.15\) & \(1.99992\) & \(1.99719\) & \(0.37\) & \(1.99493\) & \(1.98654\)\\
\(0.2\) & \(1.99968\) & \(1.99612\) & \(0.38\) & \(1.99429\) & \(1.98549\)\\
\(0.25\) & \(1.99911\) & \(1.99448\) & \(0.39\) & \(1.99359\) & \(1.98436\)\\
\(0.28\) & \(1.99853\) & \(1.99312\) & \(0.4\) & \(1.99282\) & \(1.98313\)\\
\(0.3\) & \(1.998\) & \(1.99202\) & \(0.41\) & \(1.99197\) & \(1.98179\)\\
\(0.31\) & \(1.99768\) & \(1.9914\) & \(0.42\) & \(1.99105\) & \(1.98033\)\\
\(0.32\) & \(1.99733\) & \(1.99074\) & \(0.43\) & \(1.99003\) & \(1.97036\)\\
\(0.33\) & \(1.99694\) & \(1.99002\) & \(0.44\) & \(1.96906\) & \(1.93243\)\\
\(0.34\) & \(1.99651\) & \(1.98925\) & \(0.45\) & \(1.92508\) & \(1.89014\)\\
\bottomrule\end{tabular}
\end{center}

\section{Certificate identities and reproduction instructions}
\label{app:identities}

The principal fixed identities are:
\begin{center}\small
\begin{tabularx}{\textwidth}{@{}p{0.32\textwidth}X@{}}
\toprule
Item & SHA-256\\
\midrule
Older refined provider &
\nolinkurl{f047655ac110ab34767aa0db61cd3802888158ca5fa6b6ce68c33d22b2d22960}\\
Retuned refined provider &
\nolinkurl{0ee511e46d6125450e63b932b3369a44995ce386e24ef139611ed3b58e3e15eb}\\
Main classical history &
\nolinkurl{90d120c378589ca7f7e68508a4037c2a9e1b3245f403386dad3d36da9dc215b0}\\
\bottomrule
\end{tabularx}
\end{center}
After unpacking the ancillary certificate package, the standard-library command \texttt{python verify.py} checks the
integrity manifest and runs the independent rational source, geometry,
history and finite-fixture consumers. Adding \texttt{--regenerate}
also rebuilds all four classical histories in a separate temporary
directory and compares their compressed bytes with the supplied files.
The original provider identities and both directed tail-lane identities
are retained in the package manifest and audit receipts.
Neither standard-library mode reruns native transcendental production,
repeats finite RH, proves (A1)--(A3), or builds Lean.

For native tail regeneration, \texttt{code/setup\_runtime.py} extracts
the unchanged, hash-checked compatible runtime from the supplied
\texttt{Proof\_and\_Replay.zip}; NumPy is also required. Alternatively,
the documented environment variable selects an installed compatible
FLINT library. Running \texttt{code/directed\_tail.py} separately with
\texttt{--bits 192} and \texttt{--bits 256} regenerates all declared
tail rectangles. The README documents the full commands, dependencies
and subsequent exact source-consumer invocation.

\section*{Acknowledgements}
The author acknowledges Dale James Raymond's supplied manuscript,
source evidence and analytic foundations.
Their use and independent numerical rechecking do not imply
Raymond's endorsement of this new classical-strip transfer.

\section*{AI assistance}
OpenAI ChatGPT assisted in the derivations, manuscript preparation,
code reimplementation and recorded arithmetic checks.
AI assistance, agreement of programs and successful replay do not
constitute external mathematical refereeing.

\end{document}